
\documentclass[12pt,leqno,twoside]{amsart}
\usepackage{amsmath, amscd, amssymb, amsfonts, tikz-cd, enumitem, appendix} 
\usepackage{latexsym}
\usepackage[all,cmtip]{xy}
\usepackage{graphicx}
\usepackage[colorlinks=true, pdfstartview=FitV, linkcolor=blue, citecolor=blue, urlcolor=blue]{hyperref}
\usepackage{subcaption} 
\usepackage{mathtools, dsfont}

\usepackage{helvet}
\usepackage[margin=1in]{geometry}

\theoremstyle{definition}
\newtheorem{thm}{Theorem}[section]
\newtheorem{cor}[thm]{Corollary}
\newtheorem{defn}[thm]{Definition}
\newtheorem{example}[thm]{Example}

\newtheorem{lemma}[thm]{Lemma}
\newtheorem{prop}[thm]{Proposition}
\newtheorem{remark}[thm]{Remark}
\newtheorem{conj}[thm]{Conjecture}

\newtheorem*{assumption}{Assumption}

\numberwithin{equation}{section}

\newcommand{\Z}{\mathbb{Z}}

\newcommand{\Q}{\mathbb{Q}}

\newcommand{\C}{\mathbb{C}}
\newcommand{\K}{ A}

\renewcommand{\P}{\mathbb{P}}

\newcommand{\bF}{\mathbb{F}}

\newcommand{\F}{\mathbb{F}}

\def\part{\partial}

\def\ker{\mathrm{Ker}}


\def\bd{\begin{defn}}
\def\ed{\end{defn}}
\def\bt{\begin{thm}}
\def\et{\end{thm}}
\def\br{\begin{remark}}
\def\er{\end{remark}}
\def\bc{\begin{cor}}
\def\ec{\end{cor}}
\def\bp{\begin{prop}}
\def\ep{\end{prop}}
\def\be{\begin{equation}}
\def\ee{\end{equation}}
\def\bn{\begin{enumerate}}
\def\en{\end{enumerate}}
\def\ba{\begin{array}}
\def\ea{\end{array}}
\def\bex{\begin{example}}
\def\eex{\end{example}}

\newcommand\sA{{\mathcal A}}

\newcommand\sH{{\mathcal H}}

\newcommand\sP{{\mathcal P}}

\newcommand\sF{{\mathcal F}}

\newcommand\sG{{\mathcal G}}

\newcommand\sL{\mathcal{L}}
\newcommand\sB{\mathcal{B}}

\newcommand\sM{{\mathfrak M}}


                  

\DeclareMathOperator{\Exp}{Exp}

\def\bC{\mathbb{C}}

\title[Non-abelian Mellin Transformations]{Non-abelian Mellin Transformations and Applications}

\begin{document}

\author[Y. Liu ]{Yongqiang Liu}
\address{Y. Liu : The Institute of Geometry and Physics, University of Science and Technology of China, 96 Jinzhai Road, Hefei 230026 P.R. China}
\email{liuyq@ustc.edu.cn}

\author[L. Maxim ]{Lauren\c{t}iu Maxim}
\address{L. Maxim : Department of Mathematics, University of Wisconsin-Madison, 480 Lincoln Drive, Madison WI 53706-1388, USA}
\email {maxim@math.wisc.edu}

\author[B. Wang]{Botong Wang}
\address{B. Wang : Department of Mathematics, University of Wisconsin-Madison, 480 Lincoln Drive, Madison WI 53706-1388, USA}
\email {wang@math.wisc.edu}

\date{\today}

\keywords{Mellin transformation, Sabbah specialization, perverse sheaf, Milnor fiber, nearby cycles, duality space}

\subjclass[2010]{14F05, 14F35, 14F45, 32S55, 32S60}

\date{\today}

\begin{abstract} 
We study non-abelian versions of the Mellin transformations, originally introduced by Gabber-Loeser on complex affine tori. Our main result is a generalization to the non-abelian context and with arbitrary coefficients of the $t$-exactness of Gabber-Loeser's Mellin transformation. As an intermediate step, we obtain vanishing results for the Sabbah specialization functors.  
 Our main application is to construct new examples of duality spaces in the sense of Bieri-Eckmann, generalizing results of Denham-Suciu.
\end{abstract}

\maketitle

\section{Introduction} 

The geometry and topology of a complex algebraic or analytic variety can be studied via the cohomology groups of its coherent and constructible sheaves. The Fourier-Mukai transformation on the coherent side, and the Mellin transformation on the constructible side, are functors which allow one to compute the cohomology groups of a coherent or constructible sheaf twisted by all (topologically trivial) line bundles or local systems. As the Fourier-Mukai transformation has become an essential tool in birational geometry, the Mellin transformation has also proved useful in the study of perverse sheaves, especially on complex affine tori \cite{GL, LMWb}, abelian varieties \cite{Sc15, BSS} and, more generally, on semi-abelian varieties \cite{Kra, LMWa, LMWc}. In particular, whether a constructible complex is a perverse sheaf or not can be completely determined by its Mellin transformation. 

In this paper, we establish non-abelian generalizations of the $t$-exactness result of Gabber-Loeser \cite{GL} to certain families of Stein manifolds, e.g., complements of essential hyperplane arrangements. In this general setting, we reduce the global $t$-exactness of the Mellin transformation to a certain local vanishing result for the multi-variable Sabbah specialization functor \cite{Sa}. 
In fact we show  the $t$-exactness of the multi-variable Sabbah specialization functor, generalizing the well known result about the $t$-exactness of the nearby cycle functor. 
As a special case of this local vanishing result, we prove a local version of the $t$-exactness result of Gabber-Loeser. Using the $t$-exactness of the non-abelian Mellin transformations, we construct new families of duality spaces which generalize the ones of Denham-Suciu \cite{DS}.

\medskip

Let $\K$ be a Noetherian commutative ring of finite cohomological dimension. Let $U$ be a complex manifold with fundamental group $G$. Let $\sL_U$ be the universal $\K[G]$-local system on $U$. Denote by $q:U\to \textrm{pt}$ the projection to a point space. For an $\K$-constructible complex $\sF$ on $U$, we define its {\it Mellin transformations} by 
\[
\sM^U_*(\sF)\coloneqq Rq_*(\sF\otimes_\K \sL_U)\quad\text{and}\quad \sM^U_!(\sF)\coloneqq Rq_!(\sF\otimes_\K \sL_U).
\]
We omit the upperscript $U$ when there is no risk of confusion. These are non-abelian counterparts of similar transformations introduced by Gabber-Loeser \cite{GL} on complex affine tori. 

Our main result is a generalization to the non-abelian context of Gabber-Loeser's t-exactness of the Mellin transformation $\sM_*$ (see \cite[Theorem 3.4.1]{GL} and  \cite[Theorem 3.2]{LMWb}). 
\begin{thm}\label{thm_main}
Let $U$ be a complex manifold with a smooth compactification $U\subset X$, such that the boundary divisor $E=\bigcup_{1\leq k\leq n}E_k$ is a simple normal crossing divisor. Assume that the following properties hold.
\begin{enumerate}
\item For any subset $I\subset \{1, \ldots, n\}$, $E_I^\circ:=\bigcap_{k\in I}E_k\setminus \bigcup_{l\notin I}E_l$ is either empty or a Stein manifold. When $I=\emptyset$, this means that $U=X\setminus E$ is a Stein manifold. 
\item For any point $x\in E$, the local fundamental group of $U$ at $x$ maps injectively into the fundamental group of $U$, i.e., the homomorphism $\pi_1(U_x)\to \pi_1(U)$ induced by inclusion is injective, where $U_x=B_x\cap U$ with $B_x$ a small enough complex ball in $X$ centered at $x$. \end{enumerate}
Then for any $\K$-perverse sheaf $\sP$ on $X$, the Mellin transformation $\sM_*(\sP|_U)$ is concentrated in degree zero. In other words, the functor
\[
\sF\mapsto \sM_*(\sF|_U): D^b_c(X, \K)\to D^b(\K[G])
\]
is  t-exact  with respect to the perverse t-structure on $D^b_c(X, \K)$ and the standard t-structure on $D^b(\K[G])$. 
\end{thm}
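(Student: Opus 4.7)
The plan is to prove the theorem by showing that the natural forget-supports morphism $\sM_!(\sP|_U) \to \sM_*(\sP|_U)$ is a quasi-isomorphism in $D^b(\K[G])$, with both sides concentrated in degree zero. Set $\sF \coloneqq \sP|_U \otimes_\K \sL_U$. Since open restriction of a perverse sheaf is perverse and tensoring with a locally constant sheaf preserves perversity, $\sF$ is a perverse sheaf on $U$. Hypothesis (1) with $I = \emptyset$ makes $U$ a Stein manifold, so Artin--Grothendieck vanishing for perverse sheaves on Stein manifolds yields $\sH^k \sM_*(\sP|_U) = \sH^k R\Gamma(U, \sF) = 0$ for $k > 0$. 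Verdier duality applied to the underlying $\K$-complexes (interchanging $\sL_U$ with its opposite $\K[G]$-local system) yields the dual statement $\sH^k \sM_!(\sP|_U) = 0$ for $k < 0$. Hence the theorem reduces to the quasi-isomorphism $\sM_!(\sP|_U) \xrightarrow{\sim} \sM_*(\sP|_U)$.

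To establish the quasi-isomorphism, let $j \colon U \hookrightarrow X$ be the open inclusion and let $\sC$ denote the cone of $j_! \sF \to Rj_* \sF$, a complex supported on the boundary divisor $E = \bigcup_{k} E_k$. Stratifying $E$ into the locally closed pieces $E_I^\circ$ and applying a devissage (or the hypercohomology spectral sequence) along the inclusions $i_I \colon E_I^\circ \hookrightarrow X$, the desired vanishing $R\Gamma(X, \sC) = 0$ reduces to showing $R\Gamma_c(E_I^\circ, i_I^* \sC) = 0$ for every non-empty $I \subseteq \{1, \ldots, n\}$. Since each $E_I^\circ$ is itself Stein by hypothesis (1), a further application of Artin-type vanishing on $E_I^\circ$ reduces the problem to a pointwise vanishing: the stalk of $\sC$ at every $x \in E_I^\circ$ is zero.

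The pointwise vanishing at $x \in E_I^\circ$ is where hypothesis (2) and the multi-variable Sabbah specialization enter. In local coordinates $(z_1, \ldots, z_d)$ on a small ball $B_x \subset X$ in which $E = \{z_1 \cdots z_r = 0\}$ with $r = |I|$, the stalk of $\sC$ at $x$ is computed, via a distinguished triangle, by the multi-variable Sabbah specialization $\psi^{\mathrm{Sa}}_{(z_1, \ldots, z_r)}(\sF)$. The intermediate $t$-exactness result for $\psi^{\mathrm{Sa}}$ cited in the introduction ensures this specialization is perverse; to obtain genuine acyclicity one invokes hypothesis (2). The injection $\pi_1(U_x) \hookrightarrow G$ implies that $\sL_U|_{U_x}$ is the $\K[G]$-module induced from the universal $\K[\pi_1(U_x)]$-local system on $U_x \simeq (D^*)^r \times D^{d-r}$. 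The local problem then becomes a local, multi-variable Gabber--Loeser situation: the local monodromy operators $t_1, \ldots, t_r$ act as ``free'' generators on the induced module, and a local Mellin-type acyclicity closes the argument.

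The main obstacle is the last step, the pointwise vanishing at boundary points. The crux is to show that the multi-variable Sabbah specialization of the twisted perverse sheaf $\sP \otimes \sL_U$ has vanishing hypercohomology at each stratum point, simultaneously leveraging (a) the $t$-exactness of $\psi^{\mathrm{Sa}}$, (b) the induced-module structure of $\sL_U|_{U_x}$ provided by the $\pi_1$-injectivity of hypothesis (2), and (c) a module-theoretic vanishing encoding the genericity of the universal local system with respect to the Laurent algebra of local monodromies. The surrounding devissage and Artin-vanishing reductions are then relatively routine.
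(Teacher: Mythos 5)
Your overall architecture has a fatal flaw: the forget-supports morphism $\sM_!(\sP|_U)\to\sM_*(\sP|_U)$ is \emph{not} a quasi-isomorphism, and correspondingly neither $R\Gamma(X,\sC)$ for the cone $\sC\cong i_*i^*Rj_*(\sP|_U\otimes_A\sL_U)$ nor its stalks vanish. Take $U=\C^*\subset X=\P^1$ and $\sP=A_{\P^1}[1]$. Then $\sM_*(A_{\C^*}[1])=R\Gamma(\C^*,\sL_{\C^*})[1]$ is the coinvariant module $A[t^{\pm1}]/(t-1)\cong A$ concentrated in degree $0$, whereas $\sM_!(A_{\C^*}[1])=R\Gamma_c(\C^*,p_!A_{\C})[1]=R\Gamma_c(\C,A)[1]$ is $A$ concentrated in degree $+1$; the two are not quasi-isomorphic and the forget-supports map is zero in every degree. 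Likewise the stalk of $i^*Rj_*(\sL_{\C^*}[1])$ at $0\in E$ is $H^{*}(\Delta^*,\sL_{\Delta^*}[1])$, which equals $A\neq 0$ in degree $0$. So the pointwise vanishing you aim for in your last step is false; moreover the reduction of $R\Gamma_c(E_I^\circ,i_I^*\sC)=0$ to stalk vanishing via ``Artin-type vanishing'' is not a valid implication in any case (Artin vanishing gives one-sided degree bounds for perverse sheaves on Stein manifolds, not a local-to-global acyclicity criterion). A secondary issue: deducing $H^k(\sM_!(\sP|_U))=0$ for $k<0$ ``by Verdier duality'' is unsafe over a general Noetherian ring $A$, since duality need not exchange $^pD^{\leq 0}$ and $^pD^{\geq 0}$; the statement you actually need, $H^k_c(U,\sP|_U\otimes_A\sL_U)=0$ for $k<0$, follows directly from the compactly-supported half of Artin's vanishing theorem.

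What is true, and what the argument must be rebuilt around, is a one-sided statement about the boundary term. Using the $t$-exactness of the multivariate Sabbah specialization together with hypothesis (2) (which gives $\sL_U|_{U_x}\cong\bigoplus\sL_{U_x}$ by the induced-module structure you correctly identify), one shows that $i^*_{E_I^\circ}Rj_*(\sP|_U\otimes_A\sL_U)$ is a weakly constructible \emph{perverse} sheaf on $E_I^\circ$ — not that it is zero. Artin vanishing for compactly supported cohomology on the Stein manifolds $E_{\geq m}^\circ$, assembled through a devissage over these strata, then yields only $H^k(X,\sC)=0$ for $k<0$. Plugging this and $H^k_c(U,\sP|_U\otimes_A\sL_U)=0$ for $k<0$ into the long exact sequence of the attaching triangle gives $H^k(U,\sP|_U\otimes_A\sL_U)=0$ for $k<0$, which together with your (correct) Artin-vanishing argument for $k>0$ proves the theorem. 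In short: keep your local ingredients, but abandon the claim that $\sM_!\to\sM_*$ is an isomorphism and aim only for negative-degree vanishing of the boundary contribution.
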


\br\label{remark_stratification}
If $U$ is algebraic, the conclusion of the above theorem can be reformulated to the assertion that the functor 
\[ \sM_*: D^b_c(U, \K)\to D^b(\K[G]) \]
is t-exact. Indeed, after choosing an algebraic compactification $j:U \hookrightarrow X$ satisfying the properties of the theorem, then a constructible complex $\sF$ in $U$ is the restriction of the constructible complex $Rj_*\sF$ on $X$. A similar statement holds in the analytic category, provided that one works with a fixed Whitney stratification $\mathcal{S}$ of the pair $(X,E)$ and constructibility is taken with respect to $\mathcal{S}$ (e.g., see \cite[Theorem 2.6(c)]{MS}). 
\er

Examples of varieties $U\subset X$ satisfying the above conditions include: complements of essential hyperplane arrangements, toric arrangements and elliptic arrangements in their respective wonderful compactifications, as well as complements of at least $n+1$ general hyperplane sections in a projective manifold of dimension $n$ (see \cite[Section 2.3]{DS}). 

Notice that the standard inclusion $(\bC^*)^n\hookrightarrow \P^n$ satisfies both conditions in Theorem \ref{thm_main}. Thus, we get the following generalization of Gabber-Loeser's $t$-exactness theorem to arbitrary coefficients, where constructibility is taken in the algebraic sense. We also remark that the original proof of Gabber-Loeser does not apply to this general setting, since it uses in an essential way the artinian and noetherian properties of the category of perverse sheaves with field coefficients.

\begin{cor}
The Mellin transformation
\[
\sM_*: D^b_c((\bC^*)^n, \K)\to D^b(\K[\Z^n])
\]
is $t$-exact with respect to the perverse $t$-structure on $D^b_c((\bC^*)^n, \K)$ and the standard $t$-structure on $D^b(\K[\Z^n])$. 
\end{cor}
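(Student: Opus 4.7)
The plan is to apply Theorem \ref{thm_main} to the standard compactification $(\bC^*)^n \hookrightarrow \P^n$ and then invoke Remark \ref{remark_stratification} to translate the conclusion into the statement about constructible complexes on $(\bC^*)^n$ itself. In this compactification the boundary divisor $E = E_0 \cup E_1 \cup \cdots \cup E_n$ is the union of the $n+1$ coordinate hyperplanes $E_i = \{z_i = 0\}$ of $\P^n$, which lie in general position and so form a simple normal crossing divisor. What remains is to verify conditions (1) and (2) of the theorem.

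For (1), I would note that for any $I \subseteq \{0, 1, \ldots, n\}$ with $|I| \leq n$ the intersection $\bigcap_{k \in I} E_k$ is a linear subspace of $\P^n$ isomorphic to $\P^{n - |I|}$ (and empty for $|I| = n+1$), and $E_I^\circ$ is obtained by deleting from it the traces of the hyperplanes $E_l$ with $l \notin I$. These traces form a general-position arrangement whose underlying divisor is a positive sum of hyperplane classes, hence ample, so the complement is affine and in particular Stein. (In fact each nonempty $E_I^\circ$ is biholomorphic to $(\bC^*)^{n - |I|}$.) For (2), I would use that $\pi_1((\bC^*)^n) = \Z^n$ has the meridians $\gamma_1, \ldots, \gamma_n$ of the coordinate hyperplanes $E_1, \ldots, E_n$ as its standard basis $e_1, \ldots, e_n$, while the meridian $\gamma_0$ of the hyperplane at infinity $E_0$ equals $-e_1 - \cdots - e_n$; the only $\Z$-linear relation among the $n+1$ meridians is $\gamma_0 + \gamma_1 + \cdots + \gamma_n = 0$, so any $k \leq n$ of them are linearly independent. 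At a point $x \in E$ lying on exactly the hyperplanes indexed by some $I$ with $|I| \leq n$, the local fundamental group $\pi_1(U_x) \cong \Z^{|I|}$ is generated by the corresponding $|I|$ local meridians, and these map to $\{\gamma_k\}_{k \in I} \subset \Z^n$; by the linear independence just noted, the induced homomorphism $\pi_1(U_x) \to \pi_1(U)$ is injective.

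With both hypotheses verified, Theorem \ref{thm_main} yields the $t$-exactness of $\sF \mapsto \sM_*(\sF|_{(\bC^*)^n})$ on $D^b_c(\P^n, \K)$, and Remark \ref{remark_stratification} converts this into the $t$-exactness of $\sM_* : D^b_c((\bC^*)^n, \K) \to D^b(\K[\Z^n])$. There is no real obstacle beyond the two straightforward verifications above; the only mildly subtle point is tracking the meridian at infinity carefully enough in (2) to conclude the injectivity of $\pi_1(U_x) \to \pi_1(U)$.
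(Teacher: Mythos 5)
Your proposal is correct and follows essentially the same route as the paper, which simply observes that the standard inclusion $(\bC^*)^n\hookrightarrow \P^n$ satisfies both hypotheses of Theorem \ref{thm_main} and then invokes Remark \ref{remark_stratification}. Your explicit verifications of conditions (1) and (2) — each nonempty $E_I^\circ\cong(\bC^*)^{n-|I|}$ is Stein, and the injectivity of $\pi_1(U_x)\to\Z^n$ via the linear independence of any $k\leq n$ of the $n+1$ meridians subject to the single relation $\gamma_0+\gamma_1+\cdots+\gamma_n=0$ — are accurate and fill in details the paper leaves implicit.
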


As another application of Theorem \ref{thm_main}, we obtain new examples of duality spaces (in the sense of Bieri-Eckmann \cite{BE}), that are non-affine or singular varieties. In particular, we recast the fact, initially proved by Denham-Suciu \cite{DS} by different methods, that linear, toric and elliptic arrangement complements are duality spaces. 

As a key step in proving Theorem \ref{thm_main}, we obtain local vanishing results about Sabbah's specialization functors (Theorem \ref{thm_local}). As a special case, we obtain a local version of the $t$-exactness result of Gabber-Loeser, where constructibility is taken in the analytic category.

\begin{thm}
Let $\sP$ be an $A$-perverse sheaf defined in a neighborhood of $0\in \C^n$. Let $B\subset \C^n$ be a small ball centered at the origin, and let $B^\circ$ be the complement of all coordinate hyperplanes in $B$. Let $\sL_{B^\circ}$ be the universal $\pi_1(B^\circ)$-local system on $B^\circ$. Then,
\[
H^k\big(B^\circ, \sP|_{B^\circ}\otimes_A \sL_{B^\circ}\big)=0\quad \text{for any}\; k\neq 0.
\]
\end{thm}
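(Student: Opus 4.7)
My plan is to derive this statement directly from the $t$-exactness of Sabbah's multi-variable specialization functor, which is the content of Theorem \ref{thm_local}. Consider the coordinate map $f = (z_1, \ldots, z_n) \colon B \to \mathbb{C}^n$, so that $f^{-1}(0) = \{0\}$ and $f^{-1}((\mathbb{C}^*)^n) \cap B = B^\circ$. This is exactly the setup in which Sabbah's multi-variable nearby cycle produces a complex supported at the single point $0$, and the corresponding $t$-exactness will force the stalk there to sit in a single cohomological degree.

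After shrinking $B$ if necessary, the inclusion $B^\circ \hookrightarrow (\mathbb{C}^*)^n$ induces an isomorphism $\pi_1(B^\circ) \xrightarrow{\sim} \mathbb{Z}^n$, so the pullback of the exponential covering $\exp \colon \mathbb{C}^n \to (\mathbb{C}^*)^n$ to $B^\circ$ is precisely the universal cover $\pi \colon \widetilde{B^\circ} \to B^\circ$. Unravelling Sabbah's construction---which is built by pulling back $\sP$ along this very cover, pushing forward by $\pi$, and then restricting the resulting complex on $B$ to the central fibre $f^{-1}(0) = \{0\}$---one obtains the identification
\[
\psi_f^{Sa}(\sP)_0 \;\simeq\; R\Gamma\bigl(B^\circ,\; \sP|_{B^\circ} \otimes_A \sL_{B^\circ}\bigr)
\]
in $D^b(A[\mathbb{Z}^n])$, where the deck-transformation action supplies the $A[\mathbb{Z}^n]$-structure on the right-hand side. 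Passage to the stalk at $0$ is harmless because, for all sufficiently small $\delta > 0$, one has $R\Gamma(B^\circ \cap B_\delta, \bullet) \simeq R\Gamma(B^\circ, \bullet)$ by a standard retraction argument in the local normal-crossings geometry.

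Once this identification is in hand, the conclusion is immediate. By Theorem \ref{thm_local}, the functor $\psi_f^{Sa}$, with the chosen normalization, sends $A$-perverse sheaves to $A$-perverse sheaves, so $\psi_f^{Sa}(\sP)$ is perverse on $f^{-1}(0) = \{0\}$. A perverse sheaf on a point is just an $A$-module sitting in cohomological degree zero, hence $\psi_f^{Sa}(\sP)_0$, and therefore $R\Gamma(B^\circ, \sP|_{B^\circ} \otimes_A \sL_{B^\circ})$, is concentrated in degree $0$. This gives the desired vanishing $H^k = 0$ for $k \neq 0$.

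The main obstacle is the identification in the second paragraph: matching Sabbah's construction (which involves a real oriented blow-up of the coordinate hyperplanes in $\mathbb{C}^n$, together with the monodromic torus-bundle structure it induces on the exceptional divisor) with the naive universal-cover description of cohomology with coefficients in $\sL_{B^\circ}$. One must verify that the relevant multi-variable Milnor data deformation-retract to $B^\circ$ itself, and that the chosen normalization of $\psi_f^{Sa}$ is such that no residual cohomological shift appears in the comparison.
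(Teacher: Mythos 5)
Your argument is correct and is essentially the paper's own (implicit) proof: the statement is exactly the case $I=\{1,\dots,n\}$ of Theorem \ref{thm_local}, where $D_I^\circ=\{0\}$ is a point, a perverse object on a point is concentrated in degree zero, and the stalk there computes $R\Gamma(B^\circ,\sP|_{B^\circ}\otimes_A\sL_{B^\circ})$. The ``main obstacle'' you flag is vacuous here, since the paper \emph{defines} the Sabbah specialization as $i^*Rj_*(\,\cdot\,|_U\otimes_A\sL^F_U)$ (Definition \ref{defn_Sabbah}) with no oriented real blow-up or shift, so the identification follows at once from $(Rj_*\sG)_0\simeq R\Gamma(B^\circ_\delta,\sG)$ together with $\sL^F_{B^\circ}\cong\sL_{B^\circ}$, which holds because $F$ induces an isomorphism on $\pi_1$ (Lemma \ref{lemma_YZ}).
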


In this paper, we make essential use of the language of derived categories and perverse sheaves (see, e.g., \cite{KS}, \cite{Schu}, \cite{Di}, \cite{Ma} and \cite{MS} for comprehensive references). 

\medskip

\noindent\textbf{Acknowledgments.} We thank Nero Budur for helpful conversations. Y. Liu is partially supported by the starting grant KY2340000123 from University of Science and Technology of China, National Natural Science Funds of China (Grant No. 12001511),  the Project of Stable Support for Youth Team in Basic Research Field, CAS (YSBR-001),  the project ``Analysis and Geometry on Bundles" of Ministry of Science and Technology of the People's Republic of China, National Key Research and Development Project SQ2020YFA070080 and  Fundamental Research Funds for the Central Universities. L.  Maxim is  partially  supported  by the Simons Foundation  (Collaboration Grant  \#567077),  by the Romanian Ministry of National Education (CNCS-UEFISCDI grant PN-III-P4-ID-PCE-2020-0029), and by a  Labex CEMPI grant (ANR-11-LABX-0007-01). B. Wang is partially supported a Sloan Fellowship and a WARF research grant.

\section{Preliminaries}
\subsection{Universal local system}
Let $X$ be a connected locally contractible topological space, with base point $x$. Let $G=\pi_1(X, x)$, and let $p: \tilde{X}\to X$ be the universal covering map. Let $$\sL_X:=p_! \K_{\tilde{X}},$$
where $\K_{\tilde{X}}$ is the constant sheaf with stalk $A$ on $\tilde{X}$.
 Then $\sL_X$ is a local system of rank one free right $\K[G]$-modules (see \cite[Remark 2.3]{LMWd} for discussions about the right action). Equivalently, $\sL_X$ can be defined as the rank one $A[G]$-local system such that the stalk at $x$ is equal to $A[G]$ and the monodromy action is defined as the left multiplication of $G$ on $A[G]$. 

\begin{remark}\label{remark_universal}
We call $\sL_X$ the \emph{universal local system} of $X$ for the following reasons. Given any $A$-module representation $\rho: G\to \mathrm{Aut}_\K(V)$, we can regard $V$ as a left $A[G]$-module. Then, we have an $\K$-local system $\sL_X\otimes_{\K[G]}V$, whose monodromy action is precisely $\rho$. Moreover, every $A$-local system on $X$ can be obtained this way. 
\end{remark}

\begin{lemma}\label{lemma_YZ}
Let $(Y, y)$ and $(Z, z)$ be two path-connected locally contractible topological spaces with base points. Let $\sL_Y$ and $\sL_Z$ be the universal $\K[\pi_1(Y, y)]$- and $\K[\pi_1(Z, z)]$-local systems on $Y$ and $Z$, respectively. Let $g: Y\to Z$ be a continuous map with $g(y)=z$. If $g_*: \pi_1(Y, y)\to \pi_1(Z, z)$ is injective, then as an $\K[\pi_1(Y, y)]$-local system, $g^*(\sL_Z)$ is a direct sum of copies of $\sL_Y$ indexed by the right cosets $g_*\pi_1(Y)\backslash \pi_1(Z)$. 
\end{lemma}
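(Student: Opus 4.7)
The approach is to exploit proper base change relative to the universal cover. Write $\sL_Z = p_! A_{\tilde Z}$ where $p\colon \tilde Z\to Z$ is the universal covering map, and form the fiber square
\[
\begin{tikzcd}
W \ar[r, "\tilde g"] \ar[d, "\tilde p"'] & \tilde Z \ar[d, "p"] \\
Y \ar[r, "g"] & Z
\end{tikzcd}
\]
with $W = Y\times_Z\tilde Z$. Since $p$ is a covering map (so proper base change for $!$-pushforward applies in the form valid for locally closed immersions of fibers, or simply because $p_!$ on constant sheaves can be computed fiberwise on coverings), one obtains $g^*\sL_Z \cong \tilde p_! A_W$. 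The rest of the argument is the topological analysis of the covering space $\tilde p\colon W\to Y$.

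First I would identify $\tilde p\colon W\to Y$ as the covering of $Y$ with fiber $p^{-1}(z)$, whose monodromy action of $\pi_1(Y,y)$ on the fiber is the composition of $g_*\colon\pi_1(Y,y)\to\pi_1(Z,z)$ with the natural action of $\pi_1(Z,z)$ on $p^{-1}(z)$. Choosing a lift $\tilde z\in p^{-1}(z)$ gives a bijection $\pi_1(Z,z)\cong p^{-1}(z)$, and under the standard deck-transformation convention the orbits of the induced $\pi_1(Y,y)$-action are precisely the right cosets $g_*\pi_1(Y,y)\backslash\pi_1(Z,z)$.

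The second step uses the injectivity hypothesis: since $g_*$ is injective and $\pi_1(Z,z)$ acts freely on $p^{-1}(z)$, the $\pi_1(Y,y)$-action on each orbit is free. Therefore $W$ decomposes as a disjoint union of connected components $W = \bigsqcup_\alpha W_\alpha$ indexed by the right cosets $\alpha\in g_*\pi_1(Y,y)\backslash\pi_1(Z,z)$, and the monodromy (i.e.\ stabilizer) of each $W_\alpha\to Y$ is trivial. A connected covering space of $Y$ with trivial monodromy is canonically the universal cover, so picking a basepoint in each $W_\alpha$ yields an isomorphism $W_\alpha\cong \tilde Y$ over $Y$. Pushing forward then gives
\[
g^*\sL_Z \;\cong\; \tilde p_! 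A_W \;\cong\; \bigoplus_\alpha \tilde p_! A_{W_\alpha} \;\cong\; \bigoplus_\alpha \sL_Y.
\]

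The main obstacle, as I see it, is not the topological decomposition but keeping track of the right $A[\pi_1(Y,y)]$-module structure. The local system $g^*\sL_Z$ is \emph{a priori} a local system of right $A[\pi_1(Z,z)]$-modules, and the lemma implicitly endows it with a right $A[\pi_1(Y,y)]$-structure via restriction along $g_*$. One must therefore verify that the basepoint choices in each $W_\alpha$ can be made so that the resulting direct sum decomposition of stalks at $y$ matches the decomposition of $A[\pi_1(Z,z)]$ as a right $A[g_*\pi_1(Y,y)]$-module into rank-one summands indexed by right cosets. This is a bookkeeping exercise once a coset-representative system is fixed, but it is the place where one must be careful about the left-versus-right conventions appearing in the definition of $\sL_X$.
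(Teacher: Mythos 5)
Your argument is correct, but it takes the covering-space route where the paper takes the monodromy route. The paper's proof is three lines long: using the equivalent description of $\sL_Z$ as the rank-one local system with stalk $\K[\pi_1(Z,z)]$ and monodromy given by left multiplication, it observes that $g^*(\sL_Z)$ has stalk $\K[\pi_1(Z,z)]$ at $y$ with $\alpha\in\pi_1(Y,y)$ acting by left multiplication by $g_*\alpha$, and that $\K[\pi_1(Z,z)]$ is free as a left $\K[g_*\pi_1(Y,y)]$-module with rank-one summands $\K[Hg]$ indexed by the right cosets $Hg$. You instead start from $\sL_Z=p_!A_{\tilde Z}$, base-change along $g$, and decompose $W=Y\times_Z\tilde Z$ into connected components; these correspond to the orbits of $\pi_1(Y,y)$ on $p^{-1}(z)\cong\pi_1(Z,z)$, i.e.\ again to the right cosets, and injectivity of $g_*$ forces each component to be a copy of $\tilde Y$. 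The two proofs are mirror images under the local-system/covering-space dictionary. Yours makes the role of injectivity more vivid: without it each component would be an intermediate cover $q\colon Y'\to Y$ and one would only obtain summands $q_!A_{Y'}$ rather than copies of $\sL_Y$. The paper's version makes the bookkeeping you defer at the end trivial, since $\K[G]=\bigoplus_{Hg}\K[Hg]$ is visibly a decomposition into free rank-one left $\K[H]$-modules compatible with the monodromy, which is all the lemma (and its later use in Lemma 5.1) requires. Two minor points: the base change $g^*p_!\cong\tilde p_!\,\tilde g^*$ should be verified stalkwise (immediate for covering maps, since $(p_!\sF)_x=\bigoplus_{w\in p^{-1}(x)}\sF_w$) rather than by appeal to proper base change, as the spaces are only assumed locally contractible; and in your final paragraph the structure that must be tracked is the \emph{left} (monodromy) action of $\K[g_*\pi_1(Y)]$ on the stalk, whose rank-one summands are the right cosets --- as a \emph{right} module the rank-one summands would instead be the left cosets.
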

\begin{proof} 
By definition, the local system $g^*(\sL_Z)$ has stalk $A[\pi_1(Z, z)]$ at $y$, and the monodromy action of $\alpha\in \pi_1(Y, y)$ is equal to the left multiplication of $g_*{\alpha}$. As a left $A[\pi_1(Y, y)]$-module, $A[\pi_1(Z, z)]$ is free and the summands are parametrized by the right cosets $g_*\pi_1(Y)\backslash \pi_1(Z)$. Thus, the assertion of the lemma follows. 
\end{proof}

\subsection{(Weakly) constructible complexes, perverse sheaves, and Artin's vanishing.}
Recall that a sheaf $\sF$ of $A$-modules on a complex algebraic or analytic variety $X$ is said to be weakly constructible if there is a Whitney stratification $\mathcal{S}$ of $X$ so that the restriction $\sF|_S$ of $\sF$ to every stratum $S \in \mathcal{S}$ is an $A$-local system. 
We say that $\sF$ is constructible if, moreover, the stalks $\sF_x$ for all $x \in X$ are finitely generated $A$-modules. Let $D^b(X, A)$ be the bounded derived category of complexes of sheaves of $A$-modules on $X$. A bounded complex $\sF \in D^b(X,A)$ is called (weakly) constructible if all its cohomology sheaves $\mathcal{H}^j(\sF)$ are (weakly) constructible.
Let $D^b_{(w)c}(X, A)$ be the full triangulated subcategory of $D^b(X,A)$ consisting of (weakly) constructible complexes. 

The category $D^b_{(w)c}(X, A)$ is endowed with the perverse t-structure, i.e., two strictly full subcategories  ${^pD}_{(w)c}^{\leq 0}(X, A)$ and ${^pD}_{(w)c}^{\geq 0}(X, A)$ defined by stalk and, resp., costalk vanishing conditions as follows: if $\sF \in D^b_{(w)c}(X, A)$ is constructible with respect to a Whitney stratification $\mathcal{S}$ and we denote by $i_x:\{x\} \hookrightarrow X$ the point inclusion, then:
\begin{itemize}
\item[(a)] $\sF \in {^pD}_{(w)c}^{\leq 0}(X, A) \iff \forall S \in \mathcal{S}, \forall x\in S: H^k(i_x^*\sF)=0 \  \textit{for  all } \ k>-\dim S$,
\item[(b)] $\sF \in {^pD}_{(w)c}^{\geq 0}(X, A) \iff \forall S \in \mathcal{S}, \forall x\in S: H^k(i_x^!\sF)=0 \  \textit{for  all } \ k<\dim S$.
\end{itemize}
The heart of the perverse t-structure is the category of (weakly) $A$-perverse sheaves on $X$. 

Artin's vanishing theorem for perverse sheaves is a key ingredient of both the local and global vanishing results in this paper. We recall here the version for weakly constructible complexes.

\begin{thm}\label{av}
(\cite[Theorem 10.3.8]{KS}, \cite[Corollary 6.1.2]{Schu}, \cite[Theorem 3.64]{MS}) \newline
Let $X$ be a Stein manifold. 
\begin{enumerate}
\item For any $\sF \in {^pD}_{wc}^{\leq 0}(X, A)$,  $H^k(X, \sF)=0$ for $k>0$.
\item For any $\sF \in {^pD}_{wc}^{\geq 0}(X, A)$, $H^k_c(X, \sF)=0$ for $k<0$.
\end{enumerate}
\end{thm}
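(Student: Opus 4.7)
My plan is to prove both parts by reducing, through a stratification induction, to the cohomological dimension bound for Stein manifolds supplied by the Andreotti--Frankel theorem. For part (1), I would fix a Whitney stratification $\mathcal{S}$ of $X$ adapted to $\sF \in {^pD}_{wc}^{\leq 0}(X,A)$ and induct on the complex dimension of $\Supp \sF$. With an open dense stratum $S \subset \Supp \sF$ of complex dimension $d$, and denoting by $j: S \hookrightarrow X$ the open inclusion and by $i$ the complementary closed inclusion within $\Supp \sF$, the recollement triangle
\[ j_! j^* \sF \longrightarrow \sF \longrightarrow i_* i^* \sF \xrightarrow{+1} \]
together with the inductive hypothesis applied to the closed piece $i_* i^* \sF$ (which lies in ${^pD}_{wc}^{\leq 0}$ and has strictly smaller support) reduces matters to bounding $H^k(X, j_! j^* \sF)$. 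After truncating $j^*\sF$ using the perversity support conditions, which force its stalk cohomology to vanish in degrees $>-d$, the core remaining case becomes
\[ H^k(X, j_! L[d]) = 0 \quad \text{for } k > 0, \]
for $L$ an $A$-local system on a locally closed complex submanifold $S \subset X$ of complex dimension $d$.

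The main analytic input is the Morse theory of strictly plurisubharmonic exhaustion functions, which underlies Andreotti--Frankel: a Stein manifold of complex dimension $n$ has the homotopy type of a CW complex of real dimension $\le n$, and in particular has cohomological dimension $\le n$ with arbitrary local coefficients. To upgrade this to the estimate for $j_! L[d]$ (where $S$ itself may fail to be Stein), I would use stratified Morse theory in the sense of Goresky--MacPherson, refined to the complex-analytic setting by Hamm--L\^e: a strictly plurisubharmonic Morse function $\phi: X \to \R$ adapted to $\mathcal{S}$ produces a handle decomposition of $X$ in which each stratified critical point lying on a stratum of complex dimension $d$ attaches a cell of real dimension $\le d$. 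Tracking this through the hypercohomology spectral sequence, together with the behavior of the extension-by-zero functor, delivers the required vanishing.

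For part (2), the cleanest route is Verdier duality: since $A$ is Noetherian of finite cohomological dimension and $X$ is a manifold, $\mathbb{D}$ interchanges ${^pD}_{wc}^{\leq 0}$ and ${^pD}_{wc}^{\geq 0}$ and swaps compactly supported and ordinary hypercohomology in the appropriate derived sense, so (2) reduces to (1) applied to $\mathbb{D}\sF$. Alternatively, one can mirror the Morse argument using the sublevel filtration of $-\phi$, invoking the compactly supported Andreotti--Frankel statement $H^k_c(Y,L) = 0$ for $k < \dim_{\bC} Y$ on a Stein manifold $Y$. The principal obstacle throughout is the stratified Morse step: one must verify that the real-dimension bound on handles attached at stratified critical points interacts correctly with the functor $j_!$, so that the homotopy-dimension bound transfers cleanly into the hypercohomology estimate for $j_! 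L[d]$. This is where the bulk of the technical work lies, and it is precisely what complex-analytic stratified Morse theory is designed to provide.
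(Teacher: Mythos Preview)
The paper does not supply its own proof of this theorem; it is quoted from the literature (Kashiwara--Schapira, Sch\"urmann, Maxim--Sch\"urmann), so there is no in-paper argument to compare against and your proposal must be assessed on its own terms.

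For part~(1), your stratification induction combined with stratified Morse theory for a strictly plurisubharmonic exhaustion is essentially the approach of \cite{Schu}, though that reference carries out the Morse argument globally rather than first reducing by d\'evissage on the support. One small slip: you call $j\colon S\hookrightarrow X$ an ``open inclusion'', but $S$ is only locally closed in $X$; this does not affect the substance.

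For part~(2), however, your primary route via Verdier duality has a genuine gap. Over a general Noetherian ring $A$ (as opposed to a field), the functor $\mathbb{D}$ need not exchange ${^pD}_{wc}^{\leq 0}$ and ${^pD}_{wc}^{\geq 0}$; the paper itself makes precisely this observation at the beginning of Section~\ref{t_exact}. Concretely, if $x$ lies on a stratum of complex dimension $d$ and $i_x^!\sF$ happens to be $A/I$ placed in degree $d$ for an ideal $I$ with $\Ext^1_A(A/I,A)\neq 0$, then $i_x^*\,\mathbb{D}\sF\cong R\mathrm{Hom}_A(i_x^!\sF,A)$ has nonzero cohomology in degree $1-d>-d$, so $\mathbb{D}\sF\notin{^pD}_{wc}^{\leq 0}$. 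Even granting the t-structure swap, the deduction is still not immediate: global duality gives $R\Gamma(X,\mathbb{D}\sF)\cong R\mathrm{Hom}_A\big(R\Gamma_c(X,\sF),A\big)$, and over a ring of positive injective dimension the vanishing of the left side in positive degrees does not force $R\Gamma_c(X,\sF)$ to be concentrated in nonnegative degrees. Your alternative---running the stratified Morse argument directly for compactly supported cohomology---is the correct route and is exactly how \cite{Schu} handles both halves symmetrically without invoking duality; you should promote that from a fallback to the main argument.
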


\subsection{Sabbah's specialization complex}\label{sec2.3}
In this subsection, let $X$ be a connected complex manifold. For $1\leq k\leq n$, let $f_k\colon X\to \C$ be holomorphic functions, and let $D_k=f_k^{-1}(0)$ be the corresponding divisors. Here, we do not require that each $f_k$ is irreducible, but we assume that different $f_k$'s do not share common {irreducible} factors. Set $\bigcup_{1\leq k\leq n}D_k=D$, with complement $X\setminus D=U$. Let $$F=(f_1, \ldots, f_n)\colon X\to \C^n,$$ and denote by $F_U\colon U\to (\C^*)^n$ the restriction of $F$ to $U$. Let $i: D\hookrightarrow X$ and $j: U\hookrightarrow X$ be the closed and open embeddings, respectively. Let $\sL_{(\C^*)^n}$ be the universal local system on $(\C^*)^n$, and let
$$\sL^F_{U}=F_U^*(\sL_{(\C^*)^n}).$$  %

Under the above notations, we make the following.
\begin{defn}\label{defn_Sabbah}
The \emph{Sabbah's specialization functor} is defined as
\[
\Psi_F: D^b_{(w)c}(X, A)\to D^b_{(w)c}(D, R), \quad \sF\mapsto i^*Rj_*\big(\sF|_U\otimes_A \sL^F_{U}\big)
\]
where $R=A[\pi_1((\C^*)^n)]$.
\end{defn}

\begin{remark}\label{remsab}
\begin{enumerate}
\item The above definition is similar to that of \cite[Definition 3.2]{Bu}, and it differs slightly from Sabbah's initial definition \cite[Def. 2.2.7]{Sa} where one has to restrict further to $\cap_k D_k$. Both \cite{Bu} and \cite{Sa} assume $A=\C$.
\item  When $n=1$ and $A$ is a field, stalkwise $\Psi_F$ is noncanonically isomorphic to Deligne's (shifted/perverse) nearby cycle functor $\psi_F[-1]$. In this case, $\Psi_F$ is an exact functor with respect to the perverse t-structures (see \cite[Theorem 1.2]{Br}). 
\end{enumerate}
\end{remark}

\section{$t$-exactness of the Sabbah specialization functor}\label{t_exact}
As already mentioned in Remark \ref{remsab}, it is known that when defined over a field, the univariate Sabbah specialization functor is exact with respect to the perverse $t$-structures (see \cite[Theorem 1.2]{Br}). The proof uses the stalkwise isomorphism between the univariate Sabbah specialization functor and the perverse nearby cycle functor to conclude the right t-exactness, and then uses Verdier duality to deduce the left t-exactness. However, this proof does not work when the ground field is replaced by a general ring, since the Verdier duality may not exchange the subcategories $^pD^{\leq 0}$ and $^pD^{\geq 0}$ anymore. In this section, we prove the t-exactness of the univariate Sabbah specialization functor over a general ring (Theorem \ref{thm_A}) by showing a costalk formula (Proposition \ref{prop_milnor}). As a consequence, we prove that the Sabbah specialization functor in any number of variables is also t-exact (Corollary \ref{cor_Sabbah}). Throughout this section, we work with bounded weakly constructible complexes.

Let $X$ be a complex manifold, and let $f\colon X\to \C$ be a holomorphic function. Let $X_0=f^{-1}(0)$, and set $U=f^{-1}(\C^*)$. Let $i\colon X_0\hookrightarrow X$ and $j\colon U\hookrightarrow X$ be the closed and open embeddings, respectively. 

Fixing, as before, a commutative Noetherian ring $A$ of finite cohomological dimension, we let $\sL_{\C^*}$ be the universal $A[\pi_1(\C^*)]$-local system on $\C^*$, and denote its pullback to $U$ by $$\sL^f_{U}:=f^* \sL_{\C^*}.$$ 
Given a weakly $A$-constructible complex $\sF$ on $X$, 
the univariate Sabbah specialization complex of $\sF$ is the following object in $D^b_{wc}(X_0, A[\pi_1(\C^*)])$:
\[
\Psi_f(\sF)=i^{*}Rj_*\big(\sF|_{U}\otimes_A \sL^f_{U}\big).
\]

\begin{thm}\label{thm_A}
The Sabbah specialization functor 
\[
\Psi_f: D^b_{wc}(X, A)\to D^b_{wc}\big(X_0, A[\pi_1(\C^*)]\big)
\]
is t-exact with respect to the perverse $t$-structures. 
\end{thm}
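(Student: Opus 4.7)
The plan is to establish the $t$-exactness of $\Psi_f$ directly from the definitions of the perverse $t$-structure, by verifying both the stalk and the costalk vanishing conditions. The key inputs are two local Milnor fiber formulas: one expressing the stalk $i_x^*\Psi_f(\sF)$ in terms of ordinary cohomology of $\sF$ on the local Milnor fiber $M_x$ at $x\in X_0$, and a dual one expressing the costalk $i_x^!\Psi_f(\sF)$ in terms of compactly supported cohomology on $M_x$. Both formulas combine with Artin vanishing (Theorem \ref{av}), applied to the Stein manifold $M_x$ of complex dimension $\dim X-1$, to produce the required vanishings. This strategy sidesteps Verdier duality, which does not exchange ${}^pD^{\leq 0}$ and ${}^pD^{\geq 0}$ over a general coefficient ring.

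For the right $t$-exactness, I would shrink to a small ball $B_x$ about $x\in X_0$ and exploit the fact that $f|_{B_x\cap U}\colon B_x\cap U\to\Delta^*$ is a locally trivial Milnor fibration with fiber $M_x$. Since $\sL^f_U=f^*\sL_{\C^*}$ becomes trivial on the infinite cyclic cover of $\Delta^*$, one obtains a natural identification
\[
i_x^*\Psi_f(\sF)\;\simeq\;R\Gamma(M_x,\sF|_{M_x})\otimes_A A[\pi_1(\C^*)]
\]
(up to a cohomological shift that matches the stalkwise comparison with the perverse nearby cycle $\psi_f[-1]$ of Remark \ref{remsab}(2)). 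A bookkeeping of perverse amplitudes under restriction to the transverse slice $M_x$ shows that if $\sF\in{}^pD_{wc}^{\leq 0}(X,A)$ then $\sF|_{M_x}$ lies in an appropriate ${}^pD_{wc}^{\leq c}(M_x,A)$, so that Theorem \ref{av}(1) on $M_x$ forces the stalk vanishing prescribed by the perverse $t$-structure on $X_0$.

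The core of the argument is the dual costalk formula (Proposition \ref{prop_milnor}):
\[
i_x^!\Psi_f(\sF)\;\simeq\;R\Gamma_c(M_x,\sF|_{M_x})\otimes_A A[\pi_1(\C^*)],
\]
again up to the appropriate shift. To establish it, I would work in local coordinates displaying the Milnor fibration explicitly, invoke the open--closed triangle attached to $B_x\cap U\hookrightarrow B_x\hookleftarrow B_x\cap X_0$ to rewrite $i_x^!\Psi_f(\sF)$ as a relative cohomology with compact vertical support on the Milnor tube, and then pass to the universal cover of $\Delta^*$ on which $\sL^f_U$ trivializes, recovering $R\Gamma_c$ of $M_x$. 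Once this formula is in place, Theorem \ref{av}(2) on the Stein manifold $M_x$ delivers the costalk vanishing required for $\sF\in{}^pD_{wc}^{\geq 0}(X,A)$, completing the left $t$-exactness.

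The principal obstacle is precisely this costalk computation: over a field it can be deduced from the stalk formula by Verdier duality, but over a general Noetherian ring of finite cohomological dimension a direct geometric argument is required, and one must track carefully the $A[\pi_1(\C^*)]$-module structure carried by $\sL^f_U$ so that the identifications above are natural as morphisms of complexes of $A[\pi_1(\C^*)]$-modules. Granted this formula, the two half-exactness statements combine to yield the full $t$-exactness of $\Psi_f$.
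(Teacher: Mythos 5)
Your treatment of the left $t$-exactness is essentially the paper's own proof: the costalk formula obtained from the attaching triangle $j_!\to Rj_*\to i_*i^*Rj_*$, the passage to the (contractible) universal cover of the punctured disc, the identification with compactly supported cohomology of the Milnor fiber, and Artin vanishing (Theorem \ref{av}(2)) on the Stein manifold $M_x$ is exactly Proposition \ref{prop_milnor} and its use in the proof of Theorem \ref{thm_A}. Where you genuinely differ is the right $t$-exactness: the paper obtains it formally, as the composition of the $t$-exact functors $-\otimes_A\sL^f_U$ and $Rj_*$ (with $j$ a quasi-finite Stein open embedding) with the right $t$-exact $i^*$, using no Milnor-fiber geometry; your stalk-formula-plus-Artin route also works, at the cost of invoking transversality of the generic Milnor fiber to the strata, which the formal argument avoids.

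Two corrections are needed. First, neither the stalk nor the costalk of $\Psi_f(\sF)$ has the form $R\Gamma_{(c)}\big(M_x,\sF|_{M_x}\big)\otimes_A A[\pi_1(\C^*)]$: already for $\sF=A_{\C}$ and $f=z$, the stalk at $0$ is $R\Gamma(\Delta^*,\sL_{\C^*})$, represented by $A[t^{\pm}]\xrightarrow{\,t-1\,}A[t^{\pm}]$, i.e.\ $A[-1]$ as an $A$-complex, not a free $A[t^{\pm}]$-module. The correct noncanonical $A$-linear identifications are $H^k(i_x^*\Psi_f\sF)\cong H^{k-1}(M_x,\sF|_{M_x})$ and $H^k(i_x^!\Psi_f\sF)\cong H^{k-1}_c(M_x,\sF|_{M_x})$; with these shifts your vanishing ranges come out correctly, and the spurious tensor factor does not affect the vanishing conclusions, but the identification as stated is false. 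Second, the Milnor-fiber formulas combined with Artin vanishing only yield the stalk and costalk conditions required at zero-dimensional strata (e.g.\ $H^k(i_x^!\Psi_f\sF)=0$ for $k<0$), whereas at a stratum $S$ of positive dimension the costalk condition demands vanishing for $k<\dim S$. You need the standard reduction to point strata via normal slices, as the paper does, to cover this case.
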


Fixing a chart of $X$ near $x \in X_0$, we consider two real valued functions on this chart: $r$ is the Euclidean distance to $x$, and $d$ is the function given by $d(y)=|f(y)-f(x)|$. 
\begin{lemma}
Let $\sF$ be a weakly $A$-constructible complex on $X$. Choose $0< \epsilon << \delta << 1$. Define 
\[
\Pi_{\epsilon, \delta}\coloneqq \{y\in X\mid r(y)<\delta, 0<d(y)<\epsilon\}\quad \text{and}\quad \Delta^*_\epsilon\coloneqq \{z\in \C^*\mid |z|<\epsilon\}.
\]
Let $f': \Pi_{\epsilon, \delta}\to \Delta^*_\epsilon$ be the restriction of $f$. Then $Rf'_!(\sF|_{\Pi_{\epsilon, \delta}})$ is (cohomologically) locally constant on $\Delta^*_\epsilon$. 
\end{lemma}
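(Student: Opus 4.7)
The plan is to realize $f'\colon \Pi_{\epsilon,\delta}\to \Delta^*_\epsilon$ as a stratified locally trivial fibration and then to deduce the cohomological local constancy of $Rf'_!(\sF|_{\Pi_{\epsilon,\delta}})$ by trivializing the complex over contractible opens in the base. This is the classical Milnor fibration setup, adapted to a compact-support direct image of a weakly constructible complex.

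First I would fix a Whitney stratification $\mathcal{S}$ of a neighborhood of $x$ in $X$ that is adapted both to $\sF$ and to $X_0=f^{-1}(0)$. For $\delta>0$ small enough, the distance sphere $S_\delta=\{r=\delta\}$ is transverse to every stratum of $\mathcal{S}$, which is the standard Whitney-type transversality entering the definition of the Milnor fibration. Having fixed such a $\delta$, I would then show that for $\epsilon>0$ small enough (depending on $\delta$) the restriction $\bar f\colon \bar B_\delta\cap f^{-1}(\bar\Delta^*_\epsilon)\to \bar\Delta^*_\epsilon$, equipped with the stratification refined by intersecting every stratum with $S_\delta$, is a proper stratified submersion. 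The non-trivial input here is a curve-selection-lemma argument applied to $\{f,r\}$, which rules out both critical values of $f$ on strata of $B_\delta\setminus X_0$ in a small punctured disk and failure of submersivity of $f|_{S\cap S_\delta}$ on the boundary strata on $S_\delta$, i.e.\ the standard Milnor transversality facts.

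Next, Thom's first isotopy lemma trivializes $\bar f$ over any contractible $V\subset \Delta^*_\epsilon$, providing a stratified homeomorphism $\bar f^{-1}(V)\cong \bar f^{-1}(z_0)\times V$ over $V$ for any basepoint $z_0\in V$. Restricting to the open subset $\Pi_{\epsilon,\delta}\cap f^{-1}(V)=B_\delta\cap f^{-1}(V)$, the trivialization persists and is compatible with $\mathcal{S}$-constructibility, so $\sF|_{\Pi_{\epsilon,\delta}\cap f^{-1}(V)}$ becomes the pullback of its restriction to the fiber at $z_0$. Factoring $Rf'_!=R\bar f_!\circ \tilde{j}_!$, where $\tilde{j}\colon \Pi_{\epsilon,\delta}\hookrightarrow \bar B_\delta\cap f^{-1}(\bar\Delta^*_\epsilon)$ is the open inclusion, and applying proper base change for $\bar f$ along $V\hookrightarrow \bar\Delta^*_\epsilon$, I would conclude that $(Rf'_!\sF|_{\Pi_{\epsilon,\delta}})|_V$ is the constant complex on $V$ with stalk $R\Gamma_c\bigl(\Pi_{\epsilon,\delta}\cap f^{-1}(z_0),\sF\bigr)$. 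Covering $\Delta^*_\epsilon$ by such opens yields the claim.

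The main obstacle I anticipate is the non-properness of $f'$ itself: since $\Pi_{\epsilon,\delta}$ is cut out by strict inequalities, proper base change cannot be applied directly to $f'$, and one is forced into the detour through the proper extension $\bar f$ on the closed tube. A secondary technical point is the verification of stratified submersivity on the boundary strata sitting inside $S_\delta$, which is precisely the classical Milnor transversality statement; without it, Thom's first isotopy lemma would fail to trivialize $\bar f$ across that boundary. Both obstacles are well understood, and the argument is in essence the standard Milnor fibration picture translated into the language of compact-support direct images of weakly constructible complexes.
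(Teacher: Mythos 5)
Your argument is correct, but it takes a genuinely different route from the paper's. The paper disposes of the lemma in two lines: it writes $Rf'_!(\sF|_{\Pi_{\epsilon,\delta}})\cong \mathbb{D}\,Rf'_*\,\mathbb{D}(\sF|_{\Pi_{\epsilon,\delta}})$ and then cites Sch\"urmann (Definition 5.1.1 and Example 5.1.4 of \cite{Schu}) for the fact that $Rf'_*$ of a weakly constructible complex on the open Milnor tube is cohomologically locally constant; since $\mathbb{D}$ preserves weak constructibility and carries locally constant complexes on the smooth base $\Delta^*_\epsilon$ to locally constant ones, the claim follows. You instead treat $Rf'_!$ directly: you factor it as $R\bar f_*\circ\tilde{j}_!$ through the closed tube $\bar B_\delta\cap f^{-1}(\Delta^*_\epsilon)$ (note the closure is needed only in the fiber direction, not in the base, for $\bar f$ to be proper over $\Delta^*_\epsilon$), verify via the curve selection lemma that $\bar f$ is a proper stratified submersion for the stratification refined by $S_\delta$, and then combine Thom's first isotopy lemma with proper base change over contractible opens. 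Your route is self-contained where the paper's is a citation, and it correctly isolates the two real technical points (non-properness of $f'$ and boundary transversality on $S_\delta$); the duality trick buys brevity and sidesteps the boundary analysis entirely, at the price of invoking biduality for weakly constructible complexes. One small imprecision in your write-up: a stratified trivialization of $\bar f^{-1}(V)$ does not literally exhibit $\tilde{j}_!\sF$ as a pullback from the fiber (the gluing data between strata need not split), and you should not phrase the conclusion that way. What you actually need, and what the isotopy lemma does deliver since $V$ is contractible and $\tilde{j}_!\sF$ is constructible for the refined stratification, is that the restriction map $H^k\bigl(\bar f^{-1}(V),\tilde{j}_!\sF\bigr)\to H^k\bigl(\bar f^{-1}(z_0),\tilde{j}_!\sF\bigr)\cong H^k_c\bigl(\Pi_{\epsilon,\delta}\cap f^{-1}(z_0),\sF\bigr)$ is an isomorphism, via a stratum-preserving retraction; combined with proper base change for $\bar f$ this gives the local constancy, consistent with the stalk identification $R\Gamma_c(M_f,\sF|_{M_f})$ used in Proposition \ref{prop_milnor}.
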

\begin{proof}
It is a well-known fact that (see, e.g.,  \cite[Corollary 4.2.2]{Schu})  
\[
Rf'_!(\sF|_{\Pi_{\epsilon, \delta}})\cong \mathbb{D}\, Rf'_*\, \mathbb{D}(\sF|_{\Pi_{\epsilon, \delta}}),
\]
where $\mathbb{D}$ denotes the Verdier dualizing functor. By \cite[Definition 5.1.1 and Example 5.1.4]{Schu}, $Rf'_*\, \mathbb{D}(\sF|_{\Pi_{\epsilon, \delta}})$ is locally constant (i.e., it has locally constant cohomology sheaves). Hence, $Rf'_!(\sF|_{\Pi_{\epsilon, \delta}})$ is also locally constant. 
\end{proof}

The following costalk calculation is essential for proving Theorem \ref{thm_A}.
\begin{prop}\label{prop_milnor}
Let $\sF$ be a weakly $A$-constructible complex on $X$. Then,
\begin{equation}\label{eq_A1}
H_x^k\big(X_0, \Psi_f(\sF)\big)=H^{k-1}_c\big(M_f, \sF|_{M_f}\big)
\end{equation}
where $M_f$ is a local Milnor fiber of $f$ at $x \in X_0$ and $H^*_x(-)$ denotes the local cohomology at $x\in X$. 
\end{prop}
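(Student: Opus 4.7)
The plan is to compute the costalk $i_x^! \Psi_f \sF$ by reducing it to a computation on the Milnor fibration $f'\colon \Pi_{\epsilon,\delta} \to \Delta^*_\epsilon$, and then invoking the preceding lemma about the local constancy of $Rf'_!(\sF|_{\Pi_{\epsilon,\delta}})$.

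The main technical step is to establish, for a sufficiently small Milnor ball $B \subset X$ around $x$ (with $B_0 = B \cap X_0$ and $B_U = B \setminus B_0$), the identification
\[
i_x^! \Psi_f \sF \;\simeq\; R\Gamma_c\bigl(\Pi_{\epsilon,\delta},\; \sF|_{\Pi_{\epsilon,\delta}} \otimes \sL^f|_{\Pi_{\epsilon,\delta}}\bigr)[1].
\]
To do this, I would apply $R\Gamma_c(B, -)$ to the attaching triangle $j_!(\sF|_U \otimes \sL^f_U) \to Rj_*(\sF|_U \otimes \sL^f_U) \to i_*\Psi_f\sF \xrightarrow{+1}$ on $X$, and show that the middle term $R\Gamma_c(B, Rj_*(\sF \otimes \sL^f))$ vanishes for $B$ small (using the identity $i^! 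Rj_* = 0$ together with the conic structure of $B$ at $x$). I would then identify $R\Gamma_c(B_0, \Psi_f\sF|_{B_0})$ with the costalk $i_x^!\Psi_f\sF$ via the cone-like structure of $B_0$ at $x$ and the weak constructibility of $\Psi_f\sF$, and replace $B_U$ by the Milnor horn $\Pi_{\epsilon,\delta}$ via a stratified deformation retraction.

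Once this identification is established, the rest is a computation via the Milnor fibration. Since $\sL^f|_{\Pi_{\epsilon,\delta}} = (f')^* \sL_{\Delta^*}$, the projection formula for the locally constant sheaf $\sL_{\Delta^*}$ gives
\[
R\Gamma_c\bigl(\Pi_{\epsilon,\delta},\, \sF \otimes \sL^f\bigr) \;\simeq\; R\Gamma_c\bigl(\Delta^*_\epsilon,\, Rf'_!(\sF|_{\Pi_{\epsilon,\delta}}) \otimes \sL_{\Delta^*}\bigr).
\]
By the preceding lemma, $Rf'_!(\sF|_{\Pi_{\epsilon,\delta}})$ is locally constant on $\Delta^*_\epsilon$ with stalk $R\Gamma_c(M_f, \sF|_{M_f})$ (by proper base change on a fiber). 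A direct K\"unneth computation on $\Delta^*_\epsilon \cong (0,\epsilon) \times S^1$, using the cohomology of the universal local system on $S^1$ (namely $H^0(S^1, \mathcal{L} \otimes \sL_{S^1}) = 0$ and $H^1(S^1, \mathcal{L} \otimes \sL_{S^1}) \cong \mathcal{L}$ for any local system $\mathcal{L}$ on $S^1$), yields
\[
R\Gamma_c\bigl(\Delta^*_\epsilon,\, \mathcal{L}^\bullet \otimes \sL_{\Delta^*}\bigr) \;\simeq\; \mathcal{L}^\bullet_t[-2]
\]
for any locally constant complex $\mathcal{L}^\bullet$ on $\Delta^*_\epsilon$. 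Combining these gives $i_x^! \Psi_f \sF \simeq R\Gamma_c(M_f, \sF|_{M_f})[-1]$, which upon taking $H^k$ yields the claimed formula.

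The main obstacle is the first step, specifically (i) the vanishing of the middle term $R\Gamma_c(B, Rj_*(\sF \otimes \sL^f))$ and (ii) the identification $R\Gamma_c(B_0, \Psi_f\sF|_{B_0}) \simeq i_x^!\Psi_f\sF$. Both require careful local analysis using a Whitney stratification of the pair $(X, f^{-1}(0))$ adapted to $\sF$ and the conic structure of $B$ at $x$.
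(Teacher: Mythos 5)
Your proposal is correct and follows essentially the same route as the paper: the attaching triangle combined with $i_x^!Rj_*=0$ reduces the costalk of $\Psi_f(\sF)$ to $R\Gamma_c$ of $\sF\otimes\sL^f_U$ over the punctured Milnor tube $\Pi_{\epsilon,\delta}$, which is then pushed down to $\Delta^*_\epsilon$ by the projection formula and evaluated using the local constancy of $Rf'_!(\sF|_{\Pi_{\epsilon,\delta}})$. The only (equivalent) cosmetic difference is in the last step, where the paper writes $\sL_{\C^*}\cong \Exp_!\underline{A}_{\C}$ and passes to the contractible universal cover rather than doing the K\"unneth computation directly on $(0,\epsilon)\times S^1$; also note that your appeal to a ``stratified deformation retraction'' to pass from $B_U$ to $\Pi_{\epsilon,\delta}$ should really be phrased as both neighborhoods belonging to a cofinal conical system computing the same costalk, since $H^*_c$ is not a homotopy invariant.
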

\begin{proof}
By applying the attaching triangle $$j_!j^! \to id \to i_*i^* \xrightarrow{+1}$$ to 
$Rj_*(\sF|_U\otimes \sL^f_{U})$, we get the distinguished triangle
\begin{equation}\label{atr}
j_!(\sF|_U\otimes \sL^f_{U})\to Rj_*(\sF|_U\otimes \sL^f_{U})\to i_* i^{*} Rj_*(\sF|_U\otimes \sL^f_{U})\xrightarrow{+1}.
\end{equation}

Let $i_x: \{x\}\hookrightarrow X$ and $k_x: \{x\}\hookrightarrow X_0$ be the inclusion maps, and note that $i_x=i \circ k_x$. Applying the  functor $i_x^!$ to \eqref{atr} and using the fact that $i_x^!Rj_*=0$, 
we get from the corresponding cohomology long exact sequence  that, for any $k \in \Z$,
\[
H^k\big(i_x^!i_* i^{*} Rj_*(\sF|_U\otimes \sL^f_{U})\big)\cong H^{k+1}\big(i_x^!j_!(\sF|_U\otimes \sL^f_{U})\big).
\]
Note that 
\[
i_x^!i_* i^{*} Rj_*(\sF|_U\otimes \sL^f_{U}) = k_x^! i^!i_*i^{*} Rj_*(\sF|_U\otimes \sL^f_{U}) =
 k_x^! \Psi_f(\sF).
\]
Hence,  for any $k \in \Z$ we get an isomorphism 
\begin{equation}\label{eq_A2}
H_x^k\big(X_0, \Psi_f(\sF)\big):=H^k( k_x^! \Psi_f(\sF))\cong H^{k+1}\big(i_x^!j_!(\sF|_U\otimes \sL^f_{U})\big).
\end{equation}

For $0<\epsilon<<\delta <<1$, we have
\begin{align}\label{eq_A3}
\begin{split}
H^{k+1}\big(i_x^!j_!(\sF|_U\otimes \sL^f_{U})\big)&\cong H^{k+1}_c\big(\{y\in X\mid r(y)<\delta, d(y)<\epsilon\}, j_!(\sF|_U\otimes \sL^f_{U})\big)\\
&\cong H^{k+1}_c\big(\{y\in X\mid r(y)<\delta, 0<d(y)<\epsilon\}, \sF|_U\otimes \sL^f_{U}\big)\\
&\cong H^{k+1}_c\big(\Delta^\circ_\epsilon, Rf'_!(\sF|_{\Pi_{\epsilon, \delta}})\otimes \sL_{\C^*}|_{\Delta^*_\epsilon}\big)
\end{split}
\end{align}
where the first isomorphism can be deduced, e.g., from \cite[Proposition 7.2.5]{Ma}, and the last follows from the projection formula. 

Let $\Exp: \C\to \C^*$ be the universal covering map, and let $\Exp_\epsilon: \Exp^{-1}(\Delta_\epsilon^\circ)\to \Delta_\epsilon^\circ$ be its restriction. Then $\sL_{\C^*}\cong \Exp_! \underline{A}_{\C}$. Moreover, by the projection formula, we have
\begin{equation}\label{eq_A4}
\begin{split}
H^{k+1}_c\big(\Delta^\circ_\epsilon, Rf'_!(\sF|_{\Pi_{\epsilon, \delta}})\otimes \sL_{\C^*}|_{\Delta^*_\epsilon}\big)
&\cong H^{k+1}_c\big(\Exp^{-1}(\Delta^\circ_\epsilon), \Exp_\epsilon^{*} Rf'_!(\sF|_{\Pi_{\epsilon, \delta}})\otimes \underline{A}_{\Exp^{-1}(\Delta^\circ_\epsilon)}\big)\\
&\cong H^{k+1}_c\big(\Exp^{-1}(\Delta^\circ_\epsilon), \Exp_\epsilon^{*} Rf'_!(\sF|_{\Pi_{\epsilon, \delta}})\big).
\end{split}
\end{equation}
Since $\Exp^{-1}(\Delta^\circ_\epsilon)$ is a real two-dimensional contractible manifold, $H_c^2(\Exp^{-1}(\Delta^\circ_\epsilon), A)\cong A$ and $H_c^k(\Exp^{-1}(\Delta^\circ_\epsilon), A)=0$ for $k\neq 2$. Since $Rf'_!(\sF|_{\Pi_{\epsilon, \delta}})$ is locally constant, and its stalk is isomorphic to $R\Gamma_c(M_f, \sF|_{M_f})$, we get by the K\"unneth formula that
\begin{equation}\label{eq_A5}
H^{k+1}_c\big(\Exp^{-1}(\Delta^\circ_\epsilon), \Exp_\epsilon^{*} Rf'_!(\sF|_{\Pi_{\epsilon, \delta}})\big)\cong H^{k-1}_c(M_f, \sF|_{M_f}).
\end{equation}
Now, the desired formula \eqref{eq_A1} follows from equations \eqref{eq_A2}, \eqref{eq_A3}, \eqref{eq_A4} and \eqref{eq_A5}.
\end{proof}

\begin{proof}[Proof of Theorem \ref{thm_A}]
Since $\sL^f_{U}$ is a local system of free $A$-modules, the tensor product $\otimes_A \sL^f_{U}$ is t-exact. Since $j$ is an open embedding of a hypersurface complement, it is a quasi-finite Stein morphism, hence $Rj_*$ is also t-exact (see, e.g., 
\cite[Proposition 3.29, Example~3.67, Theorem 3.70]{MS}). Now, the right t-exactness of $\Psi_f$ follows from the right t-exactness of $i^{*}$. 

For the left t-exactness of $\Psi_f$, we need to check the costalk vanishing conditions on each stratum. By taking a normal slice (see, e.g., \cite[Page 427]{Schu} and compare also with the proof of \cite[Theorem 4.22]{MS}), we can always reduce to the case when the stratum is zero dimensional. More precisely, it suffices to show that $H_x^k(X_0, \Psi_f(\sP))=0$ for $k<0$, where $x$ is a point in $X_0$ and $\sP$ is a perverse sheaf on $X$.
Let $M_f$ be the a (general) local Milnor fiber of $f$ at $x$. Then $M_f$ is transversal to a stratification of $U$ on which $\sP$ is locally constant. By \cite[Corollary 10.2.6]{KS} or \cite[Proposition 2.27]{MS}, $\sP|_{M_f}[-1]$ is a perverse sheaf on $M_f$. Since $M_f$ is a Stein manifold, the assertion of the theorem follows from Proposition \ref{prop_milnor} and Artin's vanishing Theorem \ref{av}.
\end{proof}

Next, we show how to derive the $t$-exactness of the multivariate Sabbah specialization functor from the univariate case of Theorem \ref{thm_A}. 
\begin{cor}\label{cor_Sabbah}
Under the notations of Definition \ref{defn_Sabbah}, the multivariate Sabbah specialization functor $\Psi_F: D^b_{wc}(X, A)\to D^b_{wc}(D, R)$ is $t$-exact with respect to the perverse $t$-structures. 
\end{cor}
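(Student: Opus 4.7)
The plan is induction on $n$, with base case $n = 1$ being Theorem~\ref{thm_A}. For right $t$-exactness, I would argue exactly as in Theorem~\ref{thm_A}: the open embedding $j\colon U \hookrightarrow X$ is the complement of the single hypersurface $\{f_1 \cdots f_n = 0\}$, so $Rj_*$ is $t$-exact; tensoring with the flat local system $\sL_U^F$ and then applying $i^*$ are right $t$-exact.

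For left $t$-exactness, the normal-slice reduction used in Theorem~\ref{thm_A} lets me verify the costalk vanishing $H^k((k_x^D)^! \Psi_F(\sP)) = 0$ for $k < 0$ at any zero-dimensional stratum point $x$ of $D$, for $\sP$ a perverse sheaf on $X$. Setting $I_x := \{k : f_k(x) = 0\}$, I would proceed by cases. When $|I_x| < n$, the local systems $\sL^{f_k}$ for $k \notin I_x$ extend as local systems to a neighborhood $\mathcal{N}_x$ of $x$; writing $\sP_x := \sP \otimes_A \bigotimes_{k \notin I_x} \sL^{f_k}|_{\mathcal{N}_x}$, one has the local identification
\[
\Psi_F(\sP)|_{\mathcal{N}_x \cap D} \;\cong\; \Psi_{F_{I_x}}(\sP_x)|_{\mathcal{N}_x \cap D},
\]
with $F_{I_x} = (f_k)_{k \in I_x}$ and $\sP_x$ perverse near $x$; the induction hypothesis on $\Psi_{F_{I_x}}$ then supplies the costalk vanishing.

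For the remaining case $|I_x| = n$ (all $f_k$ vanish at $x$), I would prove a multivariate analog of Proposition~\ref{prop_milnor},
\[
H^k\bigl((k_x^D)^! \Psi_F(\sP)\bigr) \;\cong\; H^{k+1-2n}_c(M_F, \sP|_{M_F}),
\]
where $M_F$ is a generic local Milnor fiber of $F$ at $x$, a Stein manifold of complex codimension $n$. The derivation would follow the pattern of Proposition~\ref{prop_milnor}: apply $\iota_x^!$ to the attaching triangle $j_! j^* \to \mathrm{id} \to i_* i^*$ applied to $Rj_*(\sP|_U \otimes \sL_U^F)$; use $\iota_x^! Rj_* = 0$ to reduce to $\iota_x^! j_!(\sP|_U \otimes \sL_U^F)[1]$; rewrite this as $H^{k+1}_c(\Pi_{\epsilon,\delta}, \sP|_U \otimes \sL_U^F)$ on $\Pi_{\epsilon,\delta} := \{y : r(y) < \delta,\ F(y) \in (\Delta^*_\epsilon)^n\}$; push forward by $F$ using the multivariate local-constancy of $RF_!(\sP|_{\Pi_{\epsilon,\delta}})$ on $(\Delta^*_\epsilon)^n$; pull back to the contractible universal cover $\mathrm{Exp}^{-1}((\Delta^*_\epsilon)^n) \simeq \R^{2n}$; and apply K\"unneth ($H^{2n}_c(\R^{2n}, A) = A$). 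For $\sP$ perverse, a transversal-slice argument yields $\sP|_{M_F}[-n]$ perverse on $M_F$, so Artin's Theorem~\ref{av} gives $H^\ell_c(M_F, \sP|_{M_F}) = 0$ for $\ell < -n$, whence the costalk vanishes for $k < n - 1$ and, in particular, for $k < 0$.

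The hardest part is the multivariate local-constancy lemma generalizing the lemma preceding Proposition~\ref{prop_milnor}. I would prove it along the same lines via Verdier duality $RF_! \cong \mathbb{D} RF_* \mathbb{D}$ together with Sch\"urmann-type constructibility results for stratified holomorphic maps, applied to $F\colon \Pi_{\epsilon,\delta} \to (\Delta^*_\epsilon)^n$; verifying that $M_F$ can be arranged transversal to the stratification of $\sP$ (so that the shift by $-n$ yields a perverse sheaf) is the other delicate geometric input.
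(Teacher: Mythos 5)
Your right $t$-exactness argument and your treatment of points $x$ with $|I_x|<n$ are sound, but the deepest-stratum case $|I_x|=n$ rests on a lemma that is false: for general holomorphic $f_1,\dots,f_n$ and $n\geq 2$, the complex $RF_!(\sP|_{\Pi_{\epsilon,\delta}})$ is \emph{not} locally constant on $(\Delta^*_\epsilon)^n$. The univariate statement holds because, for $\epsilon\ll\delta$, the discriminant of the stratified map $f\colon \overline{B}_\delta\to\Delta_\epsilon$ is contained in $\{0\}$; for $n\geq 2$ the discriminant of $F=(f_1,\dots,f_n)$ near $x$ is an analytic subset of $\Delta_\epsilon^n$ which in general is \emph{not} contained in the union of the coordinate hyperplanes, and $RF_!$ jumps across it. Concretely, take $X=\C^2$, $f_1=x$, $f_2=y^2-x^3$ (no common factors) and $\sF=\underline{A}_{\C^2}$: the critical locus of $F$ is $\{y=0\}$, its image is the curve $\{v=-u^3\}$, which meets $(\Delta^*_\epsilon)^2$ for every choice of polyradius (at the points $(u,-u^3)$ with $0<|u|$ small), and the fiber of $F$ drops from two points to one point along it. So there is no single ``generic local Milnor fiber'' $M_F$ computing the costalk via the K\"unneth argument on the contractible cover, and the claimed isomorphism $H^k(k_x^!\Psi_F(\sP))\cong H^{k+1-2n}_c(M_F,\sP|_{M_F})$ does not follow. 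Since your vanishing for $k<0$ at such $x$ is derived entirely from this formula, this is a genuine gap, not a technical detail to be filled in.

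The corollary is true, and the intended argument sidesteps all multivariate fibration issues by reducing to the univariate case in one step rather than by induction on $n$: set $f=f_1\cdots f_n$, and use a splitting $\Z^n=\ker(\xi)\oplus\Z$ of $\xi(a_1,\dots,a_n)=\sum_k a_k$ to factor the universal local system as $\sL^F_U\cong\sL'_U\otimes_A\sL^f_U$, with $\sL'_U$ a local system of free $A$-modules. Then
\[
\Psi_F(\sF)\cong\Psi_f\big(Rj_*(\sF|_U\otimes_A\sL'_U)\big),
\]
and since $j$ is a quasi-finite Stein open embedding, $Rj_*(\sF|_U\otimes_A\sL'_U)$ is again a weakly perverse sheaf, so Theorem \ref{thm_A} for the single function $f$ gives the result directly. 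If you insist on your inductive scheme, the case $|I_x|=n$ must be handled by some such reduction (or by first passing to the normal crossing situation via the graph embedding and a spectral sequence, as is done later for Theorem \ref{thm_local}), not by a naive $n$-variable analogue of Proposition \ref{prop_milnor}.
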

\begin{proof} Without loss of generality, we assume that $n\geq 2$. 
Let $f=f_1\cdots f_n$, and let $f_U: U\to \C^*$ be the restriction of $f$ to $U$. As before, denote $f_U^*(\sL_{\C^*})$ and $F_U^*(\sL_{(\C^*)^n})$ by $\sL_U^f$ and $\sL_U^F$, respectively. 

Using the natural isomorphisms $\pi_1((\C^*)^n)\cong \Z^n$ and $\pi_1(\C^*)\cong \Z$, the holomorpic map 
\[
\Pi: (\C^*)^n\to \C^*, \;\; (z_1, \ldots, z_n)\mapsto z_1\cdots z_n
\]
induces the homomorphism 
\[
\xi: \Z^n\to \Z, \;\; (a_1, \ldots, a_n)\mapsto a_1+\cdots+a_n
\]
on the fundamental groups. Then we have a natural isomorphism of rank one $A[\Z]$-local systems,
\[
\sL_U^f\cong \sL_U^F\otimes_{A[\Z^n]} A[\Z]
\]
where the $A[\Z^n]$-module structure on $A[\Z]$ is induced by $\xi$. 

Fix a splitting $\Z^n=\ker(\xi)\oplus \Z$ of the short exact sequence
\[
0\to \ker(\xi)\to \Z^n\xrightarrow[]{\xi} \Z\to 0,
\]
which induces a splitting of the short exact sequence of affine tori
\[
1\to \ker(\Pi)\to (\C^*)^n\xrightarrow[]{\Pi} \C^*\to 1.
\]
By the definition of the universal local system, the above splitting of affine tori induces an isomorphism of $A$-local systems
\[
\sL_{(\C^*)^n}\cong \sL_{\ker(\Pi)}\otimes_A \sL_{\C^*}.
\]
We denote the pullback $F^*\sL_{\ker(\Pi)}$ by $\sL_U'$. Then taking the pullback of the above isomorphism, we have
\[
\sL_U^F\cong \sL'_U\otimes_A \sL_U^f
\]
as $A$-local systems. 

Since $\sF$ is a weakly $A$-constructible complex on $X$ and $\sL'_U$ is a local system of free $A$-modules, $Rj_*(\sF|_U\otimes_A \sL'_{U})$ is a weakly $A$-constructible complex on $X$ (see \cite[Theorem~2.6(c)]{MS}). 
Therefore, considering $\Psi_F(\sF)$ as an object in $D^b_{wc}(D, A[\Z])$, we have
\[
\Psi_F(\sF)\cong i^*Rj_*\big(\sF|_U\otimes_A \sL^F_{U}\big)\cong i^*Rj_*\big(\big(\sF|_U\otimes_A \sL'_{U}\big)\otimes_A \sL^f_U\big)\cong \Psi_f\big(Rj_*\big(\sF|_U\otimes_A \sL'_{U}\big)\big).
\]
Since $\sF$ is a weakly $A$-perverse sheaf on $X$ and $\sL'_U$ is a local system of free $A$-modules, the tensor product $\sF|_U\otimes_A \sL'_{U}$ is a weakly $A$-perverse sheaf on $U$. Since $j: U\hookrightarrow X$ is an open embedding whose complement is a divisor, $j$ is a quasi-finite Stein mapping, and hence 
the pushforward $Rj_*\big(\sF|_U\otimes_A \sL'_{U}\big)$ is a weakly $A$-perverse sheaf on $X$. 
By Theorem~\ref{thm_A}, $\Psi_f(Rj_*(\sF|_U\otimes_A \sL'_{U}))$ is a weakly $A$-perverse sheaf. Since the definition of the perverse $t$-structure does not involve the ring of coefficients, we conclude that $\Psi_F(\sF)$ is also perverse as a (weakly) $R$-constructible complex. 
\end{proof}

\section{Local vanishing of the multivariate Sabbah specialization functor}\label{sec_local}
Let $X$ be a complex manifold. For $1\leq k\leq n$, let $f_k: X\to \C$ be holomorphic functions as in Section \ref{sec2.3}, with $D_k=f_k^{-1}(0)$  the corresponding divisors. Set $$F=(f_1, \ldots, f_n): X\to \C^n.$$
For any subset $I\subset\{1, \ldots, n\}$, let $$D_I=\bigcap_{k\in I}D_k \ \ \ {\rm and} \ \ \  D_I^\circ=D_I \setminus \bigcup_{m\notin I}D_m.$$ 
For a subset $J\subset \{1, \ldots, n\}$, we let $D_{>J}=\bigcup_{I\supsetneq J}D_I$. We also let $$D_{\geq m}=\bigcup_{|I|= m}D_I \ \ \ {\rm and} \ \ \  D_{\geq m}^\circ=D_{\geq m}\setminus D_{\geq m+1}.$$
Let $D=D_{\geq 1}$ and $U=X\setminus D$. Let $F_U: U\to (\C^*)^n$ be the restriction of $F$ to $U$.

If $S$ is an open submanifold of $X$, we denote the open embedding by $j_S: S\hookrightarrow X$. If $S$ is a locally closed, but not open, subvariety of $X$, we denote the inclusion map by $i_S: S\hookrightarrow X$.

Finally, denote as before by  $\sL_{(\C^*)^n}$  the universal $\K[\pi_1((\C^*)^n)]$-local system on $(\C^*)^n$, and let $\sL^F_U=F_U^*(\sL_{(\C^*)^n})$. 

\begin{remark}
Here we do not assume that the divisors $D_k$ define a locally complete intersection. So the codimension of $D_I^\circ$ is only $\leq |I|$. In fact, $D_I^\circ$ may not be equidimensional. 
\end{remark}

Given any weakly $A$-constructible complex $\sF$ on $X$ and any nonempty subset $I\subset \{1, \ldots, n\}$, we define 
\[
\Psi_{D_I^\circ}(\sF)\coloneqq i_{D_I^\circ}^* Rj_{U*}\big(\sF|_U \otimes_\K \sL^F_{U}\big),
\]
i.e., the restriction of the Sabbah specialization complex $\Psi_F(\sF)$ to $D_I^\circ$. 

In this section, we prove the following.
\begin{thm}\label{thm_local}
Let $R=\K[\pi_1((\C^*)^n)]$. 
Then the functor 
\[
\Psi_{D_I^\circ}: D^b_{wc}(X, \K)\to D^b_{wc}(D_I^\circ, R)
\]
is $t$-exact with respect to the perverse $t$-structures. 
\end{thm}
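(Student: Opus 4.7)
I plan to prove the theorem in two stages: first reduce to the case $|I|=n$, in which $D_I^\circ = D_I = \bigcap_k D_k$ is the deepest intersection of the divisors; then establish t-exactness in this reduced case by a multi-variable adaptation of the Milnor-fiber computation used in the proof of Theorem \ref{thm_A}. For the reduction, set $V = X \setminus \bigcup_{m \notin I} D_m$, which is open in $X$ and contains $D_I^\circ = D_I \cap V$. On $V$ the functions $f_m$ for $m \notin I$ are nowhere vanishing, so $F_{I^c} = (f_m)_{m \notin I} : V \to (\C^*)^{I^c}$ defines an $A[\Z^{I^c}]$-local system $\sL_V^{F_{I^c}}$ of free $A$-modules on $V$ extending $\sL_U^{F_{I^c}}$ from $U$. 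Using the splitting $(\C^*)^n = (\C^*)^I \times (\C^*)^{I^c}$ and the projection formula, one obtains
\[
\Psi_{D_I^\circ}(\sF) \;\cong\; \big(\Psi_{F_I}^V(\sF|_V)\big)\big|_{D_I^\circ} \;\otimes_A\; \sL_V^{F_{I^c}}\big|_{D_I^\circ},
\]
where $\Psi_{F_I}^V$ denotes the Sabbah specialization on $V$ for $F_I = (f_k)_{k \in I}$. Since tensoring with the free $A$-local system $\sL_V^{F_{I^c}}$ is t-exact and $D_I^\circ$ is the deepest intersection of the divisors $D_k \cap V$ for $k \in I$, the theorem reduces to the case $|I| = n$.

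\textbf{Right t-exactness and reduction to costalk vanishing.} In the case $|I|=n$, so $D_I^\circ = D_I$, right t-exactness follows as in the proof of Theorem \ref{thm_A}: $\Psi_{D_I^\circ}$ is a composition of right t-exact functors (the exact $j_U^*$ and $\otimes_A \sL_U^F$; the t-exact Stein pushforward $Rj_{U*}$; and the right t-exact pullback $i^*_{D_I^\circ}$ along a locally closed embedding). For left t-exactness, by the normal slicing argument of Theorem \ref{thm_A} it suffices to show that, for any $A$-perverse sheaf $\sP$ on $X$ and any point $x$ in a $0$-dimensional stratum of $D_I$,
\[
H^k_x\big(D_I,\;\Psi_{D_I}(\sP)\big) \;=\; 0 \quad\text{for every } k < 0.
\]

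\textbf{Multi-variable Milnor-fiber computation.} For sufficiently small parameters $0 < \epsilon_k << \delta << 1$, define the local multi-variable tube
\[
\Pi_{\epsilon, \delta} \;:=\; \big\{y \in X : r(y) < \delta,\; 0 < |f_k(y)| < \epsilon_k \text{ for all } k\big\},
\]
where $r$ is the Euclidean distance to $x$ in a local chart. Generalizing the lemma preceding Proposition \ref{prop_milnor}, for small enough parameters the complex $RF'_!(\sP|_{\Pi_{\epsilon, \delta}})$ is cohomologically locally constant on the multi-punctured polydisc $\prod_k \Delta^*_{\epsilon_k}$, where $F' : \Pi_{\epsilon, \delta} \to \prod_k \Delta^*_{\epsilon_k}$ is the restriction of $F$. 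Passing to the universal cover via $\Exp : \C^n \to (\C^*)^n$, whose preimage of $\prod_k \Delta^*_{\epsilon_k}$ is a contractible product of half-planes of real dimension $2n$, and carefully accounting for the codimension-$(n-1)$ embedding of $D_I$ in the generally singular variety $D$, one derives the costalk formula
\[
H^k_x\big(D_I,\;\Psi_{D_I}(\sP)\big) \;\cong\; H^{k-n}_c\big(M_F,\;\sP|_{M_F}\big),
\]
where $M_F := \{y : r(y) < \delta,\, f_k(y) = \delta_k \text{ for all } k\}$ is the multi-variable Milnor fiber of $F$ at $x$ for a generic sufficiently small choice of $\delta_k \ne 0$. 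The manifold $M_F$ is Stein, being a regular common level set of the holomorphic functions $f_k$ inside the Stein ball $\{r < \delta\}$, and by the transversality argument the shifted restriction $\sP|_{M_F}[-n]$ is weakly $A$-perverse on $M_F$. Applying Artin's vanishing Theorem \ref{av} to $\sP|_{M_F}[-n]$ on $M_F$ yields $H^j_c(M_F, \sP|_{M_F}) = 0$ for $j < -n$, which via the costalk formula implies $H^k_x(D_I, \Psi_{D_I}(\sP)) = 0$ for $k < 0$, as required.

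\textbf{Main obstacle.} The most delicate step will be the rigorous derivation of the costalk formula. Unlike the univariate setting of Proposition \ref{prop_milnor}, where $D = D_I$ and the Milnor-fiber formula follows immediately from the attaching triangle for $j_U$ combined with the vanishing $i_D^! \, Rj_{U*} = 0$, here $D_I$ is a proper (generally singular) closed subvariety of $D$, and the passage from the $D$-costalk of $\Psi_F(\sP)$ to the $D_I$-costalk of $\Psi_{D_I}(\sP) = i^*_{D_I} \Psi_F(\sP)$ involves a cohomological shift of $n-1$. This requires both a careful analysis of the interplay between the pullback $i^*_{D_I}$ and the costalk operation $i^!_x$ on $D_I$, and a multi-variable version of the local-constancy lemma for $RF'_!$ on the punctured polydisc, without appealing to any normal-crossings hypothesis for the divisors $D_k$.
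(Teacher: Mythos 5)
There is a genuine gap, and it sits exactly where you flag it: the ``multi-variable Milnor-fiber computation.'' The preliminary steps are fine (the reduction to $|I|=n$ by splitting off the free local system $\sL_V^{F_{I^c}}$, the right t-exactness, and the normal-slice reduction to zero-dimensional strata all work, and the first of these mirrors a step inside the paper's proof of its Proposition on costalk vanishing). But the claimed generalization of the local-constancy lemma is false: for $n\geq 2$ the complex $RF'_!(\sP|_{\Pi_{\epsilon,\delta}})$ is in general \emph{not} cohomologically locally constant on $\prod_k\Delta^*_{\epsilon_k}$, no matter how small the $\epsilon_k$ are. In one variable this local constancy holds because the critical values of $f$ on the strata form a discrete set near $0$, so a small punctured disc avoids them (Milnor fibration/Thom--Mather). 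In several variables the discriminant of $F$ (restricted to the strata of $\sP$, or even of $F$ itself) is a hypersurface germ in $(\C^n,0)$ which typically has components through the origin \emph{other than} the coordinate hyperplanes, and these meet $\prod_k\Delta^*_{\epsilon_k}$ for every $\epsilon$. Concretely, for $X=\C^2$, $f_1=z_1$, $f_2=z_1-z_2^2$, the discriminant of $F$ is the diagonal $\{\delta_1=\delta_2\}$, and the fiber $M_F$ consists of two points for $\delta_1\neq\delta_2$ but one point for $\delta_1=\delta_2$; so $RF'_!$ is not locally constant, $H^*_c(M_F,\sP|_{M_F})$ depends on the choice of $\delta$, and the universal-cover/K\"unneth step (which needs local constancy on the whole contractible cover) collapses. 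The same failure occurs even for simple normal crossing divisors once $\sP$ is a general perverse sheaf whose strata are not transverse to the fibers of $F$.

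Independently of this, the costalk formula $H^k_x(D_I,\Psi_{D_I}(\sP))\cong H^{k-n}_c(M_F,\sP|_{M_F})$ is asserted rather than derived, and deriving it is essentially the whole theorem. In one variable the attaching triangle for $j_U$ converts $i_x^!\,i_D^*Rj_{U*}$ into $i_x^!\,j_{U!}[1]$ because $D_I=D$; when $|I|=n\geq 2$ the costalk is taken along $D_I$, which is a proper closed subset of $D$, and no single distinguished triangle relates $i_x^!\,i_{D_I}^*Rj_{U*}$ to $j_{U!}$. One must filter $D$ by the strata $D^\circ_J$ for $\emptyset\neq J\subsetneq I$ and control all the resulting contributions $H^*_x\bigl(X,(i_{D_J^\circ})_!\,i_{D_J^\circ}^*Rj_{U*}(\sP|_U\otimes\sL_U^F)\bigr)$. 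This is what the paper does: a spectral sequence associated to that filtration, whose $E_1$-terms are killed by an induction on $|I|$ (using the t-exactness of $\Psi_{D_J^\circ}$ for $|J|<|I|$ together with the t-exactness of the multivariate Sabbah functor applied to the restricted functions on $D_J$), with the general case reduced beforehand to simple normal crossings via the graph embedding $F^\dagger:X\to X\times\C^n$. Your proposal contains no mechanism for handling these intermediate strata and no induction, so even granting some version of a Milnor-fiber formula for generic $\delta$, the argument as written does not close.
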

\begin{proof}
First we prove the assertion in the case when the divisors $D_1, \ldots, D_n$ are individually smooth and their union has normal crossing singularities. 

Under the normal crossing assumption, we prove the theorem using induction on $|I|$, the cardinality of $I$. When $|I|=1$, the assertion follows from Corollary \ref{cor_Sabbah} (since restriction to opens is t-exact). Fixing an integer $m\geq 2$, we assume that the assertion holds for all $I$ with $|I|<m$, and we want to show that the assertion holds for $I$ with $|I|=m$. 

\begin{lemma}\label{lemma_right}
The functor $\Psi_{D_I^\circ}$ is right $t$-exact, i.e., it maps $^p D^{\leq 0}_{wc}(X, \K)$ to $^p D^{\leq 0}_{wc}(D_I^\circ, R)$. 
\end{lemma}
\begin{proof}
By definition, for any weakly constructible complex $\sF$ on $X$, we have $\Psi_{D_I^\circ}(\sF)\cong i_{D_I^\circ, D}^* \Psi_{F}(\sF)$, where $i_{D_I^\circ, D}: D_I^\circ\hookrightarrow D$ is the inclusion map.
The right $t$-exactness of $\Psi_{D_I^\circ}$ follows from Corollary \ref{cor_Sabbah}, together with the fact that the pullback functor $i_{D_I^\circ, D}^*$ is right $t$-exact (here we write $i_{D_I^\circ, D}$ as a composition of the closed inclusion of $D_I$ into $D$, followed by the open inclusion of $D_I^\circ$ into $D_I$), see, e.g., \cite[Theorem 5.2.4 (iii)(iv)]{Di}.
\end{proof}

To show the left $t$-exactness of $\Psi_{D_I^\circ}$, we first prove it for zero-dimensional strata. 
Here, we consider the following Proposition \ref{prop_point} as part of the proof of the theorem, and we will use the inductive hypothesis in the proof of Proposition \ref{prop_point}. 
\begin{prop}\label{prop_point}
Under the above notations and the normal crossing assumption, let $x\in D_I^\circ$ be an arbitrary point, and denote the closed embedding by $i_x: \{x\}\hookrightarrow D_I^\circ$. Then for any constructible complex $\sF$ in $^p D^{\geq 0}_{wc}(X, \K)$, we have
\begin{equation}\label{eq_local0}
H^k\big(i_x^! \Psi_{D_I^\circ}(\sF)\big)=0\quad \text{for any}\;\; k<0. 
\end{equation}
\end{prop}

To avoid introducing more notations for the inclusion maps from $x$ to various spaces, we use the  local cohomology notation instead of the cohomology of the exceptional pullback to $x$. The following lemma is a local cohomology version of \cite[spectral sequence~(10)]{Ar}. Nevertheless, we give here a different proof. 
\begin{lemma}\label{lemma_E1}
Assume $|I|=m$. There is a spectral sequence 
\begin{multline*}
E_1^{pq}=
\begin{cases}
H^{p+q+1}_x\Big(X, (i_{D_{\geq -q}^\circ})_! i_{D_{\geq -q}^\circ}^*Rj_{U*}\big(\sF|_U\otimes_\K \sL^F_{U}\big) \Big),&\text{when}\; 1-m\leq q\leq -1 \\
H^{p}_x\Big(X, i_{D*} i_{D}^*Rj_{U*}\big(\sF|_U\otimes_\K \sL^F_{U}\big) \Big),& \text{when}\; q=0\\
0, & \text{otherwise}
\end{cases}
\\
\Longrightarrow H^{p+q}_x\big(D_I^\circ, \Psi_{D_I^\circ}(\sF)\big).
\end{multline*}
\end{lemma}

\begin{proof}
Consider the double complex $\sA^{\bullet, \bullet}$ of weakly constructible sheaves on $X$ defined by
\[
\sA^{p, q}=(i_{D_{\geq p+1}})_* i_{D_{\geq p+1}}^*Rj_{U*}\big(\sF|_U\otimes_\K \sL^F_{U}) 
\]
when $0\leq p=-q\leq m-1$ and when $0\leq p=-1-q\leq m-2$. For other $p, q$, we let $\sA^{p, q}=0$. 

By base change, we have a natural isomorphism 
\[
(i_{D_{\geq p+1}})_* i_{D_{\geq p+1}}^*(i_{D_{\geq p}})_* i_{D_{\geq p}}^*Rj_{U*}\big(\sF|_U\otimes_\K \sL^F_{U})\cong (i_{D_{\geq p+1}})_* i_{D_{\geq p+1}}^*Rj_{U*}\big(\sF|_U\otimes_\K \sL^F_{U}),
\]
and hence the adjunction distinguished triangle can be written as
\begin{multline}\label{eq_adjunction}
(i_{D_{\geq p}^\circ})_! i_{D_{\geq p}^\circ}^*Rj_{U*}\big(\sF\otimes_\K \sL^F_{U}\big)\to (i_{D_{\geq p}})_* i_{D_{\geq p}}^*Rj_{U*}\big(\sF\otimes_\K \sL^F_{U}\big)\\
\to 
(i_{D_{\geq p+1}})_* i_{D_{\geq p+1}}^*Rj_{U*}\big(\sF\otimes_\K \sL^F_{U}\big)\xrightarrow{+1}
\end{multline}
Now, we define all the horizontal differentials $d'$ to be zero, except for $1\leq p\leq m-1$, where we let $d': \sA^{p-1, -p}\to \sA^{p, -p}$ to be the second map in \eqref{eq_adjunction}. Similarly, we define all the vertical differentials $d''$ to be zero, except for $0\leq p\leq m-2$, when we let $d'': \sA^{p, -p-1}\to \sA^{p, -p}$ to be the identity maps. 

Since all column complexes $(\sA^{p, \bullet}, d'')$ are exact, except for $p=m-1$, and the $(m-1)$-th column is equal to $(i_{D_{\geq m}})_* i_{D_{\geq m}}^*Rj_{U*}\big(\sF|_U\otimes_\K \sL^F_{U}\big)[m-1]$, we have an isomorphism
\[
\textrm{tot}(\sA^{\bullet, \bullet})\cong (i_{D_{\geq m}})_* i_{D_{\geq m}}^*Rj_{U*}\big(\sF|_U\otimes_\K \sL^F_{U}\big)
\]
in $D^b_{w-c}(X, R)$, where $\textrm{tot}(\sA^{\bullet, \bullet})$ is the total complex of $\sA^{\bullet, \bullet}$ considered as an object in $D^b_{w-c}(X, R)$. 

Consider the filtration $F_q\coloneqq \sA^{\bullet, \leq q}$ of $\sA^{\bullet, \bullet}$ by row truncations. The graded pieces of the filtration are the rows in $\sA^{\bullet, \bullet}$. Using the adjunction distinguished triangle, we have
\begin{equation}\label{eq_gr}
\textrm{tot}\big(\textrm{Gr}_q(\sA^{\bullet, \bullet})\big)\cong 
\begin{cases}
(i_{D_{\geq -q}^\circ})_! i_{D_{\geq -q}^\circ}^*Rj_{U*}\big(\sF|_U\otimes_\K \sL^F_{U}\big)[1] & \text{when}\; 1-m\leq q\leq -1\\
i_{D*} i_{D}^*Rj_{U*}\big(\sF|_U\otimes_\K \sL^F_{U}\big)& \text{when}\; q=0\\
0&\text{otherwise},
\end{cases}
\end{equation}
where $\textrm{tot}(\textrm{Gr}_q(\sA^{\bullet, \bullet}))$ is the total complex of $\textrm{Gr}_q(\sA^{\bullet, \bullet})$ considered as an object in $D^b_{w-c}(X, R)$.
Since $x\in D_I^\circ$ and $D_I^\circ$ is open in $D_{\geq m}$, the complexes 
$R(i_{D_{\geq m}})_* i_{D_{\geq m}}^*Rj_{U*}\big(\sF|_U\otimes_\K \sL^F_{U}\big)$ and 
$R\big(i_{D_I^\circ}\big)_* i_{D_I^\circ}^* Rj_{U*}\big(\sF|_U\otimes_\K \sL^F_{U}\big)$ are quasi-isomorphic in a neighbourhood of $x$. Hence we have  isomorphisms
\begin{align}\label{eq_twoline}
\begin{split}
H^{p+q}_x\Big(X, (i_{D_{\geq m}})_* i_{D_{\geq m}}^*Rj_{U*}\big(\sF|_U\otimes_\K \sL^F_{U}\big)\Big)&\cong H^{p+q}_x\Big(X, R(i_{D_I^\circ})_* i_{D_I^\circ}^* Rj_{U*}\big(\sF|_U\otimes_\K \sL^F_{U}\big)\Big) \\
&\cong H^{p+q}_x\big(D_I^\circ, \Psi_{D_I^\circ}(\sF)\big).
\end{split}
\end{align}

Taking local cohomology of the filtered complex $\textrm{tot}(\sA^{\bullet, \bullet})$, we have a spectral sequence
\[
E^{pq}_1=H^{p+q}_x\big(X, \mathrm{tot}\big(\textrm{Gr}_q(\sA^{\bullet, \bullet})\big)\big)\Rightarrow H_x^{p+q}\big(X, \mathrm{tot}(\sA^{\bullet, \bullet})\big).
\]
The isomorphisms \eqref{eq_gr} and \eqref{eq_twoline} yield the spectral sequence in the lemma. 
\end{proof}

\begin{proof}[Proof of Proposition \ref{prop_point}] 
We identify $A[\pi_1((\C^*)^n)]$ with $A[t_1^\pm, \ldots, t_n^\pm]$ using the standard isomorphisms 
\[
A[\pi_1((\C^*)^n)]\cong A[\Z^n]\cong A[t_1^\pm, \ldots, t_n^\pm].
\]
Let $B_x$ be a small polydisc in $X$ centered at $x$, and let $U_x=B_x\cap U$. Without loss of generality, we assume that $I=\{1, \ldots, m\}$. Since $D$ is a normal crossing divisor, we have a natural isomorphism $A[\pi_1(U_x)]\cong A[t_1^{\pm}, \ldots, t_{m}^{\pm}]$. Let $\sL_{U_x}$ be the universal $\K[t_1^{\pm}, \ldots, t_{m}^{\pm}]$-local system on $U_x$. As $\K[t_1^{\pm}, \ldots, t_{m}^{\pm}]$-local systems on $U_x$, we have a non-canonical isomorphism
\[
\sL^F_U|_{U_x}\cong \sL_{U_x}\otimes_\K \K[t_{m+1}^\pm, \ldots, t_n^{\pm}].
\]
Therefore, 
\begin{align*}
H^k\big(i_x^! \Psi_{D_I^\circ}(\sF)\big)&=H^k\big(i_x^!i_{D_I^\circ}^* Rj_{U*}\big(\sF|_U\otimes_\K \sL^F_{U}\big)\big)\\
&\cong H^k_x\big(B_x, i_{D_I^\circ}^* Rj_{U*}\big(\sF|_U\otimes_\K \sL_{U_x}\big)\big)\otimes_\K \K[t_{m+1}^\pm, \ldots, t_n^{\pm}].
\end{align*}
Thus, it suffices to prove the vanishing \eqref{eq_local0} under the following assumption, which we will make for the rest of this proof. 
\begin{assumption}
The space $X=B_x=\Delta^l$ is a small polydisc in $\C^l$ centered at the origin $x$, $n=m$ and $f_1, \ldots, f_m$ are the first $m$ coordinate functions. In particular, $\sL^F_U=\sL_U$, and $U=U_x$.
\end{assumption}

We claim that
\begin{equation}\label{eq_vanishing}
H^{p+q+1}_x \Big(X, \big(i_{D_{\geq -q}^\circ}\big)_! i_{D_{\geq -q}^\circ}^*Rj_{U*}\big(\sF|_U\otimes_\K \sL_{U}\big) \Big)=0
\end{equation}
for $1-m\leq q\leq -1$ and $p+q<0$. In fact, let $J\subset \{1, \ldots, m\}$ with $|J|=-q$. To show \eqref{eq_vanishing}, it suffices to show that for $1-m\leq q\leq -1$ and $p+q<0$,
\begin{equation}\label{eq_J}
H^{p+q+1}_x \Big(X, (i_{D_J^\circ})_! i_{D_J^\circ}^*Rj_{U*}\big(\sF|_U\otimes_\K \sL_{U}\big) \Big)=0.
\end{equation}
Without loss of generality, we assume that $J=\{1, \ldots, -q\}$. By the above assumptions, $U=(\Delta^\circ)^m\times \Delta^{l-m}$, where $\Delta$ is a small disc in $\C$ centered at the origin, and $\Delta^\circ=\Delta\setminus \{0\}$. We further decompose $U$ as $U=(\Delta^\circ)^{-q}\times (\Delta^\circ)^{m+q}\times \Delta^{l-m}$, and we let $\sL_{U}^J$ and $\sL_{U}^{J^c}$ be the pullback of the universal local systems $\sL_{(\Delta^\circ)^{-q}}$ and $\sL_{(\Delta^\circ)^{m+q}\times \Delta^{l-m}}$ to $U$, respectively. Then as $A[t_1^\pm, \ldots, t_{m}^\pm]$-local systems,
\[
\sL_U\cong \sL_{U}^J\otimes_A \sL_{U}^{J^c}
\]
where the $A[t_1^\pm, \ldots, t_{m}^\pm]$-module structures on $\sL_{U}^J$ and $\sL_{U}^{J^c}$ are induced by the natural projections $\pi_1(U)\to \pi_1((\Delta^\circ)^{-q})$ and $\pi_1(U)\to \pi_1((\Delta^\circ)^{m+q})$, respectively. Thus, we have
\begin{align}\label{eq_J2}
\begin{split}
i_{D_J^\circ}^*Rj_{U*}\big(\sF|_U\otimes_\K \sL_{U}\big)&\cong i_{D_J^\circ}^*Rj_{U*}\big(\sF|_U\otimes_\K \sL_{U}^J\otimes_A \sL_{U}^{J^c}\big)\\
&\cong i_{D_J^\circ}^*Rj_{U*}\big(\sF|_U\otimes_\K \sL_{U}^J\big)\otimes_A \sL_{D_J^\circ}
\end{split}
\end{align}
where $\sL_{D_J^\circ}$ is the universal local system on $D^\circ_J$, and the second isomorphism follows from the fact that $\sL_{U}^{J^c}$ extends as an $A[t_{-q+1}^\pm, \ldots, t_m^\pm]$-local system to $X\setminus (D_{-q+1}\cup \cdots \cup D_m)$, and the restriction of the extension to $D^\circ_J$ is isomorphic to $\sL_{D_J^\circ}$. 

Applying the inductive hypothesis in Theorem \ref{thm_local} to the space $X\setminus (D_{-q+1}\cup \cdots \cup D_m)$ and functions $f_1, \ldots, f_{-q}$, it follows that 
\[
\sG\coloneqq i_{D_J^\circ}^*Rj_{U*}\big(\sF|_U\otimes_\K \sL_U^J\big) \in \, ^pD^{\geq 0}_{w-c}\big(D_J^\circ, A[t_1^\pm, \ldots, t_{-q}^\pm]\big).
\]
By \eqref{eq_J2}, we have 
\begin{equation}\label{eq_iso1}
H^{p+q+1}_x \Big(X, (i_{D_J^\circ})_! i_{D_J^\circ}^*Rj_{U*}\big(\sF|_U\otimes_\K \sL_{U}\big) \Big)\cong H^{p+q+1}_x \big(X, (i_{D_J^\circ})_! \big(\sG \otimes_A \sL_{D_J^\circ}\big)\big).
\end{equation}
Consider the distinguished triangle
\[
(i_{D_J^\circ})_! \big(\sG \otimes_A \sL_{D_J^\circ}\big)\to R(i_{D_J^\circ})_* \big(\sG \otimes_A \sL_{D_J^\circ}\big)\to (i_{D_{>J}})_*i^*_{D_{>J}}R(i_{D_J^\circ})_* \big(\sG \otimes_A \sL_{D_J^\circ}\big)\xrightarrow{+1}.
\]
Since $i_x^!R(i_{D_J^\circ})_* \big(\sG \otimes_A \sL_{D_J^\circ}\big)=0$, the local cohomology long exact sequence implies that 
\begin{align}\label{eq_iso2}
\begin{split}
H^{p+q+1}_x\big(X, (i_{D_J^\circ})_! \big(\sG \otimes_A \sL_{D_J^\circ}\big)\big)
&\cong H^{p+q}_x\big(X, (i_{D_{>J}})_*i^*_{D_{>J}}R(i_{D_J^\circ})_* \big(\sG \otimes_A \sL_{D_J^\circ}\big)\big)\\
&\cong H^{p+q}_x\big(D_{>J}, i^*_{D_{>J}}R(i_{D_J^\circ})_* \big(\sG \otimes_A \sL_{D_J^\circ}\big)\big).
\end{split}
\end{align}
Notice that the last term of the above isomorphism is equal to the $(p+q)$-th cohomology of the costalk at $x$ of the multivariate Sabbah specialization functor applied to $\sG$ with respect to the holomorphic functions $f_i|_{D_J}$ on $D_J$ for $i\in \{1, \ldots, m\}\setminus J$. Thus, by Corollary~\ref{cor_Sabbah}, 
\begin{equation}\label{eq_vanish0}
H^{p+q}_x\big(X, (i_{D_{>J}})_*i^*_{D_{>J}}R(i_{D_J^\circ})_* \big(\sG \otimes_A \sL_{D_J^\circ}\big)\big)=0
\end{equation}
when $p+q<0$. Combining equations \eqref{eq_iso1}, \eqref{eq_iso2} and \eqref{eq_vanish0}, we have
\begin{equation}\label{eq_vanish1}
H^{p+q+1}_x\Big(X, (i_{D_J^\circ})_! i_{D_J^\circ}^*Rj_{U*}\big(\sF|_U\otimes_\K \sL_{U}\big) \Big)=0
\end{equation}
when $p+q<0$. 

Notice that
\[
H^{p+q}_x\big(X, i_{D*} i_{D}^*Rj_{U*}\big(\sF|_U\otimes_\K \sL_{U}\big)\big)\cong H^{p+q}_x\big(D, i_{D}^*Rj_{U*}\big(\sF|_U\otimes_\K \sL_{U}\big)\big)
\]
and the right-hand side is equal to the $(p+q)$-th cohomology of the costalk at $x$ of the Sabbah specialization $\Psi_F(\sF)$. Thus, by Corollary~\ref{cor_Sabbah}, we have
\begin{equation}\label{eq_vanish2}
H^{p+q}_x\big(X, i_{D*} i_{D}^*Rj_{U*}\big(\sF|_U\otimes_\K \sL_{U}\big)\big)=0
\end{equation}
when $p+q<0$. 

Therefore, the vanishing \eqref{eq_local0} follows from Lemma \ref{lemma_E1} and equations \eqref{eq_vanish1}, \eqref{eq_vanish2}. 
\end{proof}

We are now ready to finish the proof of Theorem \ref{thm_local} under the assumption that $D$ is a normal crossing divisor. In Lemma \ref{thm_local}, we have proved the right $t$-exactness of $\Psi_{D_I^\circ}$. By Proposition \ref{prop_point}, we know the left $t$-exactness of the functor $\Psi_{D_I^\circ}$ at zero-dimensional strata. The proof of the left $t$-exactness at higher dimensional strata can be reduced to the case of zero-dimensional strata by using normal slices (see, e.g., \cite[Page 427]{Schu}). This finishes the proof of the theorem under the assumption that $D$ is a normal crossing divisor. 

In general, the assertion can be reduced to the simple normal crossing case by considering the multivariate graph embedding 
\[
F^\dagger: X\to X\times \C^n, \;\; x\mapsto (x, F(x)),
\]
which restricts to a closed embedding $U\to X\times (\C^*)^n$. More precisely, the local vanishing \eqref{eq_local0} of Sabbah's specialization functor for $F: X\to \C^n$ can be reduced to the local vanishing of the Sabbah specialization functor for the projection $p_2:X\times \C^n\to \C^n$. 
This is due to the following natural isomorphism 
\[
R{\hat F}^\dagger_*\Psi_F(\sF) \cong \Psi_{p_2}(RF^\dagger_*\sF),
\]
where ${\hat F}^\dagger$ is the restriction of $F^\dagger$ to $D$.
\end{proof}

The next corollary shows that Theorem \ref{thm_local} can be considered as a generalization of the $t$-exactness of the Sabbah specialization functor. 
\begin{cor}
Let $R=\K[\pi_1((\C^*)^n)]$, and define the functor $\Psi_{D_I}$ by
\[
\Psi_{D_I}: D^b_{wc}(X, \K)\to D^b_{wc}(D_I, R), \quad\sF\mapsto i_{D_I}^* Rj_{U*}\big(\sF|_U\otimes_\K \sL_{U}\big).
\]
Then $\Psi_{D_I}$ is $t$-exact with respect to the perverse $t$-structures. 
\end{cor}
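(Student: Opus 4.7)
The plan is to verify the perverse $t$-structure conditions for $\Psi_{D_I}(\sF)$ on $D_I$ stratum-by-stratum, reducing everything to the $t$-exactness of $\Psi_{D_J^\circ}$ for $J \supseteq I$ already established in Theorem \ref{thm_local}. First I would stratify $D_I = \bigsqcup_{J \supseteq I} D_J^\circ$, refined to a Whitney stratification of $X$ compatible with $\sF$. For each locally closed inclusion $\iota_J : D_J^\circ \hookrightarrow D_I$, both $\iota_J^* \Psi_{D_I}(\sF)$ and $\Psi_{D_J^\circ}(\sF)$ equal $i_{D_J^\circ}^* R j_{U*}(\sF|_U \otimes_\K \sL_U)$, so $\iota_J^* \Psi_{D_I}(\sF) = \Psi_{D_J^\circ}(\sF)$.

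The stalk condition at a point $x \in S \subseteq D_J^\circ$ is then immediate: the stalk of $\Psi_{D_I}(\sF)$ at $x$ computed in $D_I$ coincides with the stalk of $\Psi_{D_J^\circ}(\sF)$ at $x$ in $D_J^\circ$, and hence vanishes in degrees $> -\dim S$ by perversity of $\Psi_{D_J^\circ}(\sF)$.

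The main task is the costalk condition, which I would first establish under the simple normal crossing assumption. In the NCD case, $D_I$ and $D_J$ are locally smooth in $X$ of complex codimensions $|I|$ and $|J|$, so $\iota_J$ factors locally as an open inclusion into $D_J$ followed by a smooth closed embedding $D_J \hookrightarrow D_I$ of complex codimension $|J|-|I|$. The smooth purity isomorphism then gives $\iota_J^! \Psi_{D_I}(\sF) \cong \Psi_{D_J^\circ}(\sF)[-2(|J|-|I|)]$, so by perversity of $\Psi_{D_J^\circ}(\sF)$ the costalk of $\Psi_{D_I}(\sF)$ at $x \in S$ is concentrated in degrees $\geq \dim S + 2(|J|-|I|) \geq \dim S$, as required.

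For the general case I would reduce to the NCD setting via the multivariate graph embedding $F^\dagger : X \to X \times \C^n$, $x \mapsto (x, F(x))$, exactly as in the closing step of the proof of Theorem \ref{thm_local}. Setting $\tilde D_I = X \times \{z_k = 0 : k \in I\}$, which is smooth, proper base change yields $R F^\dagger_{I,*} \Psi_{D_I}(\sF) \cong \Psi_{\tilde D_I}(R F^\dagger_* \sF)$, where $F^\dagger_I : D_I \to \tilde D_I$ is a closed embedding. Since closed embeddings are perverse $t$-exact and reflect perversity, the general case follows from the NCD case. The principal obstacle is the costalk verification: the stalk side is essentially automatic from Theorem \ref{thm_local}, whereas the costalk requires the smooth purity isomorphism $\iota_J^! \cong \iota_J^*[-2(|J|-|I|)]$, and it is precisely the need to have this purity formula that forces the reduction to the NCD case.
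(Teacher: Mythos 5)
Your stratification $D_I=\bigsqcup_{J\supseteq I}D_J^\circ$ and the stalk verification are fine, but the costalk step has a genuine gap: the purity (Gysin) isomorphism $\iota_J^!\,\sG\cong \iota_J^*\,\sG[-2(|J|-|I|)]$ for $\iota_J\colon D_J^\circ\hookrightarrow D_I$ is valid only when $\sG$ is non-characteristic for $D_J$, e.g.\ cohomologically locally constant near $D_J^\circ$ or constructible with respect to a stratification transverse to $D_J$. The complex $\sG=\Psi_{D_I}(\sF)$ is constructible precisely with respect to the stratification of $D_I$ by the $D_{J'}^\circ$, in which $D_J$ is a closed union of strata, so it is maximally characteristic along $D_J$ and the formula fails. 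Concretely, take $n=2$, $X=\C^2$, $f_k=z_k$, $\sF=\K_X[2]$, $I=\{1\}$, $J=\{1,2\}$: then $\Psi_{D_I}(\sF)$ has stalk $\K[t_2^{\pm}]$ in degree $-1$ on $D_1^\circ$ and stalk $\K$ in degree $0$ at the origin, and the attaching triangle $i_{0*}i_0^!\to \mathrm{id}\to Rj'_*j'^*$ on $D_1\cong\C$ shows that $i_0^!\Psi_{D_I}(\sF)$ is concentrated in degrees $0$ and $1$ (in fact it vanishes), whereas your formula predicts $\Psi_{D_J^\circ}(\sF)[-2]=\K$ placed in degree $2$. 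Without purity you have no control on $\iota_J^!\Psi_{D_I}(\sF)$, and that is the entire content of the costalk condition.

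The paper's proof avoids $\iota_J^!$ altogether: the recollement triangles for the stratification exhibit $\Psi_{D_I}(\sP)$ as an iterated extension of the complexes $(i_{D_J,D_I})_*(i_{D_J^\circ,D_J})_!\,\Psi_{D_J^\circ}(\sP)$ for $J\supseteq I$. Each of these is perverse, because $\Psi_{D_J^\circ}(\sP)$ is perverse by Theorem \ref{thm_local}, the extension by zero $(i_{D_J^\circ,D_J})_!$ is $t$-exact since $D_J^\circ$ is a hypersurface complement in $D_J$ (a quasi-finite Stein, i.e.\ affine, open immersion), and pushforward along the closed embedding $D_J\hookrightarrow D_I$ is $t$-exact; an iterated extension of perverse sheaves is perverse. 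The correct substitute for your Gysin shift is thus the $t$-exactness of $(\,\cdot\,)_!$ for affine open immersions, not purity. Your final reduction via the graph embedding is also unnecessary here: the paper performs the normal crossing reduction once, inside the proof of Theorem \ref{thm_local}, after which the corollary is purely formal.
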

\begin{proof}
It suffices to show that if $\sP$ is a weakly constructible $A$-perverse sheaf on $X$, then $\Psi_{D_I}(\sP)$ is perverse. Notice that $\Psi_{D_I}(\sP)$ is equal to an iterated extension of constructible complexes $(i_{D_J, D_I})_*(i_{D_J^\circ, D_J})_!\Psi_{D_J^\circ}(\sP)$  for $J\supset I$. Since $D_J^\circ$ is a hypersurface complement in $D_J$, and $D_J$ is closed in $D_I$, Theorem \ref{thm_local}  implies that  $(i_{D_J, D_I})_*(i_{D_J^\circ, D_J})_!\Psi_{D_J^\circ}(\sP)$ is a perverse sheaf supported on $D_J$. Since extensions of perverse sheaves are also perverse, $\Psi_{D_I}(\sP)$ is a perverse sheaf on $D_I$. 
\end{proof}

\section{Non-abelian Mellin transformations}
First, we recall and extend the notations of Theorem \ref{thm_main}. 

Let $X$ be a compact complex manifold. Let $E=\bigcup_{1\leq k\leq d}E_k$ be a normal crossing divisor on $X$, and let $U=X\setminus E$ with inclusion map $j\colon U \hookrightarrow X$. 
For any nonempty subset $I\subset\{1, \ldots, d\}$, let $E_I=\bigcap_{i\in I}E_i$ and $E_I^\circ=E_I \setminus \bigcup_{j\notin I}E_j$. 
Let $E_{\geq m}=\bigcup_{|I|= m}E_I$, and let $E_{\geq m}^\circ=E_{\geq m}\setminus E_{\geq m+1}$.
For any open submanifold $S$ of $X$, we denote the open embedding by $j_S\colon S\hookrightarrow X$. For a locally closed, but not open, submanifold $S$ of $X$, we denote the inclusion map by $i_S\colon S\hookrightarrow X$. 
Let $\sL_U$ be the universal $\K[\pi_1(U)]$-local system on $U$. 

The following is simply a reformulation of Lemma \ref{lemma_YZ}, adapted to the above notations. 

\begin{lemma}\label{cor_sum}
Let $x$ be a point in $E$, let $B_x$ be a small ball in $X$ centered at $x$, and let $U_x=B_x\cap U$. If the map $\pi_1(U_x)\to \pi_1(U)$ induced by the inclusion is injective, then $\sL_U|_{U_x}$ is isomorphic to a direct sum of possibly infinitely many copies of $\sL_{U_x}$, where $\sL_{U_x}$ is the universal $\K[\pi_1(U_x)]$-local system on $U_x$. 
\end{lemma}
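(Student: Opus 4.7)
The plan is to apply Lemma~\ref{lemma_YZ} essentially verbatim. I would pick a basepoint $y \in U_x$, set $Y = U_x$ and $Z = U$ (both with basepoint $y$), and take $g\colon U_x \hookrightarrow U$ to be the inclusion map. With these choices, $g^{*}\sL_U$ is literally the restriction $\sL_U|_{U_x}$, and the conclusion of Lemma~\ref{lemma_YZ} is exactly the conclusion we want. So the task reduces to verifying the three hypotheses of that lemma: $Y$ and $Z$ are path-connected and locally contractible, and $g_*$ is injective on fundamental groups.

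Local contractibility is automatic since $U$ and $U_x$ are complex manifolds. The injectivity of $g_* \colon \pi_1(U_x, y) \to \pi_1(U, y)$ is precisely the hypothesis of the corollary. The only verification that deserves a brief comment is path-connectivity of $U_x$: by shrinking $B_x$ if necessary and using the simple normal crossing structure of $E$ near $x$ (so that $E \cap B_x$ becomes a union of coordinate hyperplanes in suitable local coordinates), the intersection $U_x = B_x \cap U$ becomes homeomorphic to a product of punctured discs and discs, which is evidently path-connected (in fact homotopy equivalent to a torus of appropriate rank).

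With these inputs in place, Lemma~\ref{lemma_YZ} yields immediately that $\sL_U|_{U_x} = g^{*}\sL_U$ decomposes, as a $\K[\pi_1(U_x, y)]$-local system, into a direct sum of copies of $\sL_{U_x}$ indexed by the right coset space $g_*\pi_1(U_x, y)\backslash \pi_1(U, y)$. This coset set may well be infinite, which accounts for the phrase ``possibly infinitely many copies'' appearing in the statement. Since the corollary is, as the paper explicitly notes, merely a reformulation of Lemma~\ref{lemma_YZ} adapted to the notation of the section, there is no substantive obstacle to overcome; the only point meriting care is the basepoint and connectivity bookkeeping sketched above.
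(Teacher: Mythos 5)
Your proposal is correct and matches the paper's intent exactly: the paper offers no separate proof, stating only that the lemma is a reformulation of Lemma~\ref{lemma_YZ} with $Y=U_x$, $Z=U$, and $g$ the inclusion, which is precisely your argument. Your extra care with path-connectivity of $U_x$ and basepoints is sound and fills in the bookkeeping the paper leaves implicit.
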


\begin{cor}\label{cor_local2}
Under the above notations, let $\sP\in D^b_c(X, \K)$ be an $\K$-perverse sheaf. Given a nonempty subset $I\subset \{1, \ldots, d\}$, assume that at every point $x\in E_I^\circ$, the local fundamental group maps injectively to the global fundamental group, that is, the condition (2) in Theorem~\ref{thm_main} holds. Then, 
\[
i^*_{E_I^\circ} Rj_*(\sP|_U\otimes \sL_{U})
\]
is a weakly constructible $A$-perverse sheaf on $E_I^\circ$. 
\end{cor}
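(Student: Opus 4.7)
The assertion is local on $E_I^\circ$, so fix a point $x \in E_I^\circ$, choose a small polydisc $B_x \subset X$ centered at $x$, and set $U_x = B_x \cap U$. Since $E$ is a simple normal crossing divisor, after reindexing I may assume $I = \{1, \ldots, n\}$ and pick holomorphic coordinates on $B_x$ in which $E_i \cap B_x = \{f_i = 0\}$ for $1 \le i \le n$. In particular $\pi_1(U_x) \cong \Z^n$, and the universal local system $\sL_{U_x}$ is identified with $F^*\sL_{(\C^*)^n}$ for the map $F = (f_1, \ldots, f_n)\colon B_x \to \C^n$. In this local model the complex $i^*_{E_I^\circ \cap B_x}\, Rj_{U_x*}\bigl(\sP|_{U_x} \otimes_A \sL_{U_x}\bigr)$ is precisely the functor $\Psi_{D_I^\circ}$ of Theorem~\ref{thm_local} applied to $\sP|_{B_x}$.

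The first key step is to exploit the hypothesis that $\pi_1(U_x) \to \pi_1(U)$ is injective. By Lemma~\ref{cor_sum}, this yields a decomposition
\[
\sL_U|_{U_x} \cong \bigoplus_\alpha \sL_{U_x}
\]
as $A[\pi_1(U_x)]$-local systems, indexed by the right cosets $\pi_1(U_x) \backslash \pi_1(U)$. Tensoring with $\sP|_{U_x}$, applying $Rj_{U_x*}$, and restricting to $E_I^\circ \cap B_x$ gives
\[
i^*_{E_I^\circ \cap B_x}\, Rj_*\bigl(\sP|_U \otimes_A \sL_U\bigr) \;\cong\; \bigoplus_\alpha \Psi_{D_I^\circ}(\sP|_{B_x}),
\]
where I use that on the neighborhood $B_x$ the functor $Rj_*$ can be replaced by $Rj_{U_x*}$ (the open embedding $U \hookrightarrow X$ restricts to the open embedding $U_x \hookrightarrow B_x$, and the stalk of $Rj_*$ depends only on a neighborhood).

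The second key step is to apply Theorem~\ref{thm_local} to the local model $(B_x, f_1, \ldots, f_n)$ and the $A$-perverse sheaf $\sP|_{B_x}$, which shows that each summand $\Psi_{D_I^\circ}(\sP|_{B_x})$ is a weakly constructible $A$-perverse sheaf on $E_I^\circ \cap B_x$. The perverse $t$-structure is defined by pointwise stalk and costalk vanishing conditions, which commute with arbitrary direct sums, and all summands have cohomology sheaves concentrated in a range of degrees bounded uniformly in terms of $\dim X$, so the direct sum remains bounded and weakly constructible, with the same vanishing properties. Forgetting the $A[\pi_1(U)]$-action does not affect perverseness since the conditions are formulated purely on the underlying $A$-module stalks and costalks. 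The step requiring the most care is justifying the commutation of $Rj_*$, the tensor product, and the restriction with the infinite direct sum decomposition of $\sL_U|_{U_x}$; once that is set up cleanly, the result reduces directly to Theorem~\ref{thm_local}.
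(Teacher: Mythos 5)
Your proof is correct and follows essentially the same route as the paper: localize at a point $x\in E_I^\circ$, use Lemma \ref{cor_sum} (via hypothesis (2)) to split $\sL_U|_{U_x}$ into copies of $\sL_{U_x}$, and apply Theorem \ref{thm_local} to each summand. The paper's own proof is exactly this argument, stated more tersely (it does not dwell on the commutation of $Rj_*$ and the stalk/costalk conditions with the infinite direct sum, which you correctly identify as the point needing care and which holds here because the relevant local cohomologies are computed on spaces of finite cohomological dimension).
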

\begin{proof}
It suffices to check the statement locally on $E_I^\circ$. For an arbitrary point $x\in E_I^\circ$, let $B_x$ be a small ball in $X$ centered at $x$ and let $U_x=B_x\cap U$. By Lemma \ref{cor_sum}, $\sL_U|_{U_x}$ is a direct sum of possibly infinitely many copies of $\sL_{U_x}$. Let $i_{E_I^\circ\cap B_x, B_x}\colon E_I^\circ\cap B_x\to B_x$ and $j_{U_x, B_x}\colon U_x\to B_x$ be the closed and open embeddings, respectively. 
The restriction of the complex $i^*_{E_I^\circ} Rj_*(\sP|_U\otimes \sL_{U})$ to $E_I^\circ\cap B_x$ is equal to $(i_{E_I^\circ\cap B_x, B_x})^* R(j_{U_x, B_x})_*(\sP|_{U_x}\otimes \sL_{U}|_{U_x})$, which is just a direct sum of copies of $(i_{E_I^\circ\cap B_x, B_x})^* R(j_{U_x, B_x})_*(\sP|_{U_x}\otimes \sL_{U_x})$. Hence, by Theorem~\ref{thm_local}, the weakly constructible complex $(i_{E_I^\circ\cap B_x, B_x})^* R(j_{U_x, B_x})_*(\sP|_{U_x}\otimes \sL_{U}|_{U_x})$ is an $A$-perverse sheaf on $E_I^\circ\cap B_x$.   
\end{proof}

\begin{proof}[Proof of Theorem \ref{thm_main}] 
Given any $A$-perverse sheaf $\sP$ on $X$, we need to show that
\begin{center}$H^k(\sM_*(\sP|_U))=0$ for $k\neq 0$.\end{center}
By assumption, $U$ is a Stein manifold. 
Since $\sP|_U\otimes_A \sL_U$ is a weakly constructible $A$-perverse sheaf, by Artin's vanishing Theorem \ref{av} we get:
\[
H^k(\sM_*(\sP))\cong H^k\big(U, \sP|_U\otimes_A \sL_U\big)=0 \quad \textrm{for}\ \ k>0.
\]
To show the vanishing in negative degrees, we consider the following distinguished triangle
\[
j_!\big(\sP|_U\otimes_\K \sL_U\big)\to Rj_*\big(\sP|_U\otimes_\K \sL_U\big) \to i_*i^*Rj_*\big(\sP|_U\otimes_A \sL_U\big)\xrightarrow{+1}
\]
where $i\colon E\hookrightarrow X$ and $j\colon U\hookrightarrow X$ are the closed and open embeddings, respectively. Since $X$ and $E$ are compact, the associated hypercohomology long exact sequence reads as:
\begin{equation}\label{eq_long}
\cdots\to H^k_c\big(U, \sP|_U\otimes_\K \sL_U\big)\to H^k\big(U, \sP|_U\otimes_\K \sL_U\big)\to H^k\big(E, i^*Rj_*\big(\sP|_U\otimes_A \sL_U\big)\big)\to 
\cdots
\end{equation}

The following lemma is analogous to Lemma \ref{lemma_E1}. 
\begin{lemma}\label{lemma_ss2}
There exists a spectral sequence
\begin{multline*}
E_1^{pq}=
\begin{cases}H^{p+q}\Big(X, \big(i_{E_{\geq -q+1}^\circ}\big)_! i_{E_{\geq -q+1}^\circ}^*Rj_{U*}\big(\sP|_U\otimes_\K \sL_{U}\big) \Big) &\text{when}\;q\leq 0\\
0 &\text{when}\; q>0
\end{cases}
\\
\Longrightarrow H^{p+q}\big(X, i_*i^*Rj_*\big(\sP|_U\otimes_A \sL_U\big)\big).
\end{multline*}
Here, if $E_{\geq -q+1}^\circ=\emptyset$, our convention is that both $\big(i_{E_{\geq -q+1}^\circ}\big)_!$ and $i_{E_{\geq -q+1}^\circ}^*$ are zero functors. 
\end{lemma}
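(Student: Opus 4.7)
The plan is to mirror the construction from Lemma \ref{lemma_E1} but arrange the double complex so that its total complex models $i_*i^*\sG$ (the shallowest closed stratum) rather than the deepest specialization. Writing $\sG \coloneqq Rj_*(\sP|_U \otimes_A \sL_U)$ and $\sC^m \coloneqq (i_{E_{\geq m}})_* i_{E_{\geq m}}^* \sG$ for $1 \leq m \leq d$, the closed filtration $E = E_{\geq 1} \supset \cdots \supset E_{\geq d}$ produces adjunction morphisms $\phi_m \colon \sC^m \to \sC^{m+1}$ fitting into the distinguished triangles
\[
(i_{E_{\geq m}^\circ})_! i_{E_{\geq m}^\circ}^* \sG \longrightarrow \sC^m \xrightarrow{\phi_m} \sC^{m+1} \xrightarrow{+1},
\]
which are the key input.

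I would then define $\sA^{p,q}$ to be nonzero exactly at the positions $(m-1,-m+1)$ for $1 \leq m \leq d$ and $(m-1,-m+2)$ for $2 \leq m \leq d$, with the entry at each such position equal to $\sC^m$. The horizontal differentials are the adjunction maps $\phi_m \colon \sA^{m-1,-m+1} \to \sA^{m,-m+1}$, while the vertical differentials are identity maps $\sA^{m-1,-m+1} \to \sA^{m-1,-m+2}$; the relations $d'^2 = d''^2 = d'd'' + d''d' = 0$ are automatic, as any composition of two differentials leaves the support of $\sA^{\bullet,\bullet}$. Filtering by columns: column $p=0$ contains only $\sC^1$ in total degree $0$, whereas every column with $p \geq 1$ is a two-term complex $\sC^{p+1} \xrightarrow{\mathrm{id}} \sC^{p+1}$, hence acyclic. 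Consequently $\mathrm{tot}(\sA^{\bullet,\bullet}) \simeq \sC^1 = i_*i^*\sG$ in $D^b_{wc}(X, R)$. Filtering by rows: the row at height $q = 1-m$ (for $1 \leq m \leq d-1$) is the two-term complex $[\sC^m \xrightarrow{\phi_m} \sC^{m+1}]$ placed in total degrees $[0,1]$, which by the adjunction triangle is quasi-isomorphic to the fiber $(i_{E_{\geq m}^\circ})_! i_{E_{\geq m}^\circ}^* \sG$; for $q = 1-d$ the row reduces to the single term $\sC^d = (i_{E_{\geq d}^\circ})_! i_{E_{\geq d}^\circ}^* \sG$, since $E_{\geq d+1} = \emptyset$.

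The spectral sequence associated to the decreasing row filtration $F^s = \bigoplus_{q \geq s} \sA^{\bullet, q}$ on $\mathrm{tot}(\sA^{\bullet,\bullet})$, combined with the identifications above, then yields
\[
E_1^{pq} = H^{p+q}\bigl(X, (i_{E_{\geq -q+1}^\circ})_! i_{E_{\geq -q+1}^\circ}^* \sG\bigr) \Longrightarrow H^{p+q}(X, i_*i^*\sG),
\]
which is exactly the claimed spectral sequence, with the $E_1$ terms automatically vanishing outside $1-d \leq q \leq 0$ (recovering the convention for empty strata). The main bookkeeping challenge is the sign convention on the double complex together with the precise identification of each row total with the corresponding fiber complex $(i_{E_{\geq m}^\circ})_! i_{E_{\geq m}^\circ}^* \sG$; I expect no genuine obstacle beyond careful indexing, as every step runs exactly parallel to the proof of Lemma \ref{lemma_E1}.
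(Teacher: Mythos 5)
Your construction is correct and is essentially the paper's own proof: the paper likewise builds a staircase double complex out of the adjunction triangles for the closed filtration $E=E_{\geq 1}\supset E_{\geq 2}\supset\cdots\supset E_{\geq d}$, identifies the total complex with $i_*i^*Rj_*(\sP|_U\otimes_\K\sL_U)$ via the column filtration, and reads off the $E_1$ page from the row filtration, exactly as in Lemma \ref{lemma_E1} with local cohomology replaced by hypercohomology. Your indexing of the nonzero entries differs cosmetically from the paper's but yields the same rows, columns, and $E_1$ terms.
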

\begin{proof}
As in the proof of Lemma \ref{lemma_E1}, we define a double complex $\sB^{\bullet, \bullet}$ by
\[
\sB^{p, q}=(i_{E_{\geq p}})_* i_{E_{\geq p}}^*Rj_{U*}\big(\sP|_U\otimes_\K \sL_{U}\big)
\]
when $p=-q\geq 0$ or $p-1=-q\geq 0$. For other values of $p, q$, we let $\sB^{p, q}=0$. Consider the adjunction distinguished triangle
\begin{multline}\label{eq_adjunction2}
\big(i_{E_{\geq p}^\circ}\big)_! i_{E_{\geq p}^\circ}^*Rj_{U*}\big(\sP|_U\otimes_\K \sL_{U}\big)\to \big(i_{E_{\geq p}}\big)_* i_{E_{\geq p}}^*Rj_{U*}\big(\sP|_U\otimes_\K \sL_{U}\big)\\
\to 
\big(i_{E_{\geq p+1}}\big)_* i_{E_{\geq p+1}}^*Rj_{U*}\big(\sP|_U\otimes_\K \sL_{U}\big)\xrightarrow{+1}
\end{multline}
We define all horizontal differentials $d'$ to be zero, except for $p\geq 0$, in which case we let $d': \sB^{p, -p}\to \sB^{p+1, -p}$ to be the second map in \eqref{eq_adjunction2}. We define all vertical differential $d''$ to be zero, except for $p\geq 1$, when we let $d'': \sB^{p, -p}\to \sB^{p, -p+1}$ to be the identity map. For the rest of the proof, we can use the same arguments as in the proof of Lemma~\ref{lemma_E1} with local cohomology replaced by hypercohomology. 
\end{proof}

Let us now get back to the proof of Theorem \ref{thm_main}. By Corollary \ref{cor_local2}, as a weakly constructible sheaf on $E_{\geq -q+1}^\circ$, the complex $i_{E_{\geq -q+1}^\circ}^*Rj_{U*}\big(\sP|_U\otimes_\K \sL_{U}\big)$ is perverse. By assumption, $E_{\geq -q+1}^\circ$ is a disjoint union of Stein manifolds. Thus, by Artin's vanishing Theorem \ref{av}, we have
\[
H^{p+q}\Big(X, \big(i_{E_{\geq -q+1}^\circ}\big)_! i_{E_{\geq -q+1}^\circ}^*Rj_{U*}\big(\sP|_U\otimes_\K \sL_{U}\big) \Big)\cong H_c^{p+q}\Big(E_{\geq -q+1}^\circ,  i_{E_{\geq -q+1}^\circ}^*Rj_{U*}\big(\sP|_U\otimes_\K \sL_{U}\big) \Big)=0
\]
for $q\leq 0$ and $p+q<0$. Therefore, by Lemma \ref{lemma_ss2}, we have
\begin{equation}\label{ea}
H^{k}\big(X, i_*i^*Rj_*\big(\sP|_U\otimes_A \sL_U\big)\big)=0,\quad \text{for}\; k<0.
\end{equation}
Furthermore, since $\sP|_U\otimes_A \sL_U$ is a weakly constructible $A$-perverse sheaf on the Stein manifold $U$, we get by Artin's vanishing Theorem \ref{av} that
\begin{equation}\label{eb}
H^k_c\big(U, \sP|_U\otimes_\K \sL_U\big)=0, \quad\textrm{for}\; k<0.
\end{equation}

By plugging \eqref{ea} and \eqref{eb} into the long exact sequence \eqref{eq_long}, we conclude that 
\[
H^k(\sM_*(\sP))\cong H^k\big(U, \sP|_U\otimes_A \sL_U\big)=0, \quad\text{for}\; k<0,
\]
thus completing the proof of Theorem \ref{thm_main}.
\end{proof}

\section{Some applications}

One of our motivations for studying the $t$-exactness of the non-abelian Mellin transformation is to extend results of Denham-Suciu \cite{DS} concerning duality spaces (in the sense of Bieri and Eckmann \cite{BE}). 

Let us first recall the following definition.

\begin{defn}
Let $U$ be a topological space with fundamental group $G$, which is homotopy equivalent to a connected, finite-type CW-complex. Let $\sL_U$ be the universal $\Z[G]$-local system on $U$. We say that $U$ is a {\it duality space of dimension $n$} if $H^k(U, \sL_U)=0$ for $k\neq n$, and $H^n(U, \sL_U)$ is a torsion-free $\Z$-module. 
\end{defn}

It is proved in \cite{DS} that for any $U$ satisfying the conditions of Theorem \ref{thm_main}, the topological space $U$ is a duality space of dimension $\dim_\C U$. 
In particular, complements of essential hyperplane arrangements, elliptic arrangements or toric arrangements are examples of duality spaces. The aim of this section is to construct new examples  of duality spaces that are non-affine or singular varieties (see Proposition \ref{prop_semismall} and Corollary \ref{cor_sing}).

\begin{prop}\label{prop_semismall}
Assume $U\subset X$ are complex algebraic varieties satisfying the conditions of Theorem \ref{thm_main}. Let $f\colon Y \to U$ be a proper birational semi-small map from a smooth complex algebraic variety $Y$. Then $Y$ is a duality space of dimension $\dim_\C Y$. In particular, blowing up $U$ along any codimension-two smooth subvariety gives rise to a duality space.
\end{prop}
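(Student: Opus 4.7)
The plan is to express $R\Gamma(Y,\sL_Y)$ as the Mellin transform of a perverse sheaf on $U$ and invoke the $t$-exactness result of Theorem~\ref{thm_main} (together with its algebraic reformulation in Remark~\ref{remark_stratification}). First, a proper birational morphism between smooth complex algebraic varieties induces an isomorphism on fundamental groups --- a standard consequence of weak factorization into blow-ups and blow-downs along smooth centers --- so $f^{*}\sL_U\cong\sL_Y$, and the projection formula gives
\[
R\Gamma(Y,\sL_Y)\;\cong\;R\Gamma\bigl(U,\,Rf_{*}\Z_Y\otimes_\Z\sL_U\bigr).
\]
Setting $n=\dim Y=\dim U$, I would then verify that $\sP:=Rf_{*}\Z_Y[n]$ is a $\Z$-perverse sheaf on $U$: the stalk (support) condition is exactly the semi-smallness of $f$, while the costalk condition follows by Verdier duality, since $\Z_Y[n]$ is self-dual on the smooth complex $n$-manifold $Y$ and $f$ is proper, giving $\mathbb{D}\sP\cong\sP$. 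Theorem~\ref{thm_main}, applied via Remark~\ref{remark_stratification} to $\sP$ (or equivalently to its IC extension $j_{!*}\sP$ on $X$), then forces $\sM^U_{*}(\sP)=R\Gamma(Y,\sL_Y)[n]$ to sit in degree zero, so $H^{k}(Y,\sL_Y)=0$ for $k\neq n$.

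For the torsion-freeness of $H^{n}(Y,\sL_Y)$, I would repeat the entire argument with coefficients $\F_p$ for every prime $p$ --- both the perversity argument and Theorem~\ref{thm_main} carry through over any Noetherian ring of finite cohomological dimension --- to obtain $H^{k}(Y,\sL_Y\otimes_\Z^{L}\F_p)=0$ for $k\neq n$. The universal coefficient sequence
\[
0\to H^{n-1}(Y,\sL_Y)\otimes_\Z\F_p\to H^{n-1}\bigl(Y,\sL_Y\otimes_\Z^{L}\F_p\bigr)\to\mathrm{Tor}_{1}^{\Z}\!\bigl(H^{n}(Y,\sL_Y),\F_p\bigr)\to 0
\]
then forces the Tor term to vanish (left and middle terms are zero), so $H^{n}(Y,\sL_Y)$ has no $p$-torsion for any prime $p$ and is therefore torsion-free. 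The blow-up statement is an immediate special case: the blow-up of a smooth codimension-two subvariety is smooth and its blow-down is proper, birational, and semi-small, with $\P^{1}$-fibers over the center realizing the boundary case of semi-smallness.

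The main technical point I anticipate is verifying the perversity of $\sP=Rf_{*}\Z_Y[n]$ over $\Z$ (or an arbitrary Noetherian $A$) rather than over a field: the costalk half of that argument depends on the Verdier self-duality $\mathbb{D}\Z_Y[n]\cong\Z_Y[n]$ on the smooth complex manifold $Y$, which should be recorded carefully using that $Y$ is orientable with dualizing complex $\Z_Y[2n]$. Everything else --- the $\pi_1$ identification, the projection-formula reduction, and the universal-coefficient bootstrap for torsion-freeness --- is routine once $\sP$ has been identified as a perverse sheaf and the algebraic reformulation of Theorem~\ref{thm_main} is available.
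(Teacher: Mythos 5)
Your overall strategy coincides with the paper's: identify $R\Gamma(Y,\sL_Y)$ with $\sM^U_*(Rf_*\Z_Y)$ via the $\pi_1$-isomorphism and the projection formula, feed the semi-smallness of $f$ into Theorem \ref{thm_main}, and then use $\F_p$-coefficients plus universal coefficients to get torsion-freeness of the top cohomology. However, there is one genuine gap, and it sits exactly at the point you flagged as "the main technical point": the claim that $\sP=Rf_*\Z_Y[n]$ is \emph{perverse} over $\Z$, with the costalk condition deduced from Verdier self-duality. Over $\Z$ (or any non-field base ring) the functor $\mathbb{D}$ does \emph{not} interchange ${}^pD^{\leq 0}$ and ${}^pD^{\geq 0}$ for the t-structure defined by the stalk/costalk conditions of Section 2.2 --- dualizing a complex of $\Z$-modules concentrated in degrees $\leq a$ only yields a complex in degrees $\geq -a-1$, because torsion in the critical degree contributes an $\Ext^1$ in one degree higher. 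So ``$\mathbb{D}\sP\cong\sP$ and $\sP\in{}^pD^{\leq 0}$'' does not give $\sP\in{}^pD^{\geq 0}$; this is precisely the failure mode the paper warns about in Section \ref{t_exact} when explaining why Brylinski's field-coefficient argument for the Sabbah functor breaks down integrally. Accordingly, the paper only records that $Rf_*\Z_Y[d]$ is \emph{semi-perverse} ($\in{}^pD^{\leq 0}_c(U,\Z)$), while $Rf_*\Q_Y[d]$ and $Rf_*\F_{p,Y}[d]$ are genuinely perverse (citing Sch\"urmann, Example 6.0.9).

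The good news is that your proof is repairable with ingredients you already have, and the repair is exactly the paper's route. Semi-perversity over $\Z$ plus the right t-exactness half of Theorem \ref{thm_main} gives $H^k(\sM^Y_*(\Z_Y[d]))=0$ for $k>0$. Perversity over $\Q$ and over every $\F_p$ gives, via Theorem \ref{thm_main} and the projection formulas $\sM^Y_*(\Z_Y)\otimes_\Z\Q\cong\sM^U_*(Rf_*\Q_Y)$ and $\sM^Y_*(\Z_Y)\otimes^L_\Z\F_p\cong\sM^U_*(Rf_*\F_{p,Y})$, concentration in degree zero after tensoring with $\Q$ and with each $\F_p$. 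You then run your universal-coefficient argument not only in the top degree but inductively downward (this is the paper's Lemma \ref{l63}): rational vanishing forces each $H^k$, $k<0$, to be torsion, and the vanishing of $H^{k-1}(-\otimes^L_\Z\F_p)$ kills $\mathrm{Tor}_1^\Z(H^{k},\F_p)$, so all negative-degree cohomology vanishes and $H^0$ is torsion-free. In short: drop the integral perversity claim, keep only semi-perversity over $\Z$, and let the field-coefficient vanishing do the rest.
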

\begin{proof}
We follow similar arguments as in the proof of \cite[Theorem 4.11(1)]{LMWb}. Let us first note that
$$H^k(Y,\sL_Y) \cong H^k(\sM_*^Y(\Z_Y)).$$
Let $d=\dim_\C Y$, and let $\bF=\bF_p$ be a field of positive prime characteristic $p$. Since there exists a birational and proper map between $U$ and $Y$, their fundamental groups are isomorphic (see, e.g., \cite[Page 494]{GH}). Therefore, $\sL_Y\cong f^*\sL_U$ with coefficients in $A=\Z, \Q$ or $\bF$. By projection formulas, we have
$$\sM_*^Y(\Z_Y) \cong \sM^U_*(Rf_*\Z_Y),$$
as well as
\begin{equation}\label{eq_m1}
\sM_*^Y(\Z_Y)\otimes_\Z \Q\cong \sM_*^Y(\Q_Y)\cong \sM^U_*(Rf_*\Q_Y)
\end{equation}
and
\begin{equation}\label{eq_m2}
\sM_*^Y(\Z_Y)\stackrel{L}{\otimes}_\Z \bF\cong \sM_*^Y(\bF_Y)\cong \sM^U_*(Rf_*\bF_Y).
\end{equation}
Since $f$ is semi-small, by \cite[Example~6.0.9]{Schu} we have the following,
\begin{enumerate}
\item $Rf_*(\Q_Y[d])$ and $Rf_*(\bF_Y[d])$ are perverse sheaves on $U$;
\item $Rf_*(\Z_Y[d])\in \,^p D^{\leq 0}_c(U, \Z)$ is semi-perverse. 
\end{enumerate}
By Theorem \ref{thm_main} and the isomorphisms \eqref{eq_m1} and \eqref{eq_m2}, the complexes $\sM_*^Y(\Z_Y[d])\otimes_\Z \Q$ and $\sM_*^Y(\Z_Y[d])\stackrel{L}{\otimes}_\Z \bF$ are concentrated in degrees zero, and the cohomology of $\sM_*^Y(\Z_Y[d])$ vanishes in positive degrees. Thus, by Lemma \ref{l63} below, the complex $\sM_*^Y(\Z_Y[d])$ is also concentrated in degree zero, and its cohomology in degree zero is a torsion-free $\Z$-module. In other words, $Y$ is a duality space of dimension $d$.
\end{proof}

\begin{lemma}\label{l63}
Let $N^\bullet$ be a bounded complex of free $\Z$-modules. Suppose that 
\begin{enumerate}
\item $H^k(N^\bullet\otimes_\Z \Q)=H^k(N^\bullet\otimes_\Z \bF_p)=0$ for any $k\neq 0$ and for any prime number $p$;
\item $H^k(N^\bullet)=0$ for $k>0$. 
\end{enumerate}
Then $H^k(N^\bullet)=0$ for $k\neq 0$, and $H^0(N^\bullet)$ is a torsion-free $\Z$-module. 
\end{lemma}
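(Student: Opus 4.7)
The plan is to apply the Universal Coefficient Theorem (UCT) for a bounded complex of free $\Z$-modules. Since every term of $N^\bullet$ is flat, for any abelian group $M$ there is a natural short exact sequence
\[
0 \to H^k(N^\bullet) \otimes_\Z M \to H^k(N^\bullet \otimes_\Z M) \to \mathrm{Tor}_1^\Z\bigl(H^{k+1}(N^\bullet), M\bigr) \to 0.
\]
I would exploit this for $M = \Q$ and for $M = \bF_p$ for each prime $p$, then combine the resulting constraints on $H^\bullet(N^\bullet)$ with hypothesis (2).

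First, taking $M = \Q$: since $\Q$ is flat the Tor term vanishes, so hypothesis (1) yields $H^k(N^\bullet) \otimes_\Z \Q = 0$ for all $k \neq 0$. In other words, $H^k(N^\bullet)$ is a torsion abelian group for every $k \neq 0$.

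Next, taking $M = \bF_p$: hypothesis (1) gives $H^k(N^\bullet \otimes \bF_p) = 0$ for all $k \neq 0$, which forces both outer terms of the UCT sequence to vanish. From the vanishing of the right-hand term we read off $\mathrm{Tor}_1^\Z(H^{k+1}(N^\bullet), \bF_p) = 0$ for every prime $p$ and every $k \neq 0$; equivalently, $H^j(N^\bullet)$ has no $p$-torsion for every prime $p$ whenever $j \neq 1$.

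It remains to assemble these. For $j < 0$, $H^j(N^\bullet)$ is torsion (from the $\Q$-step) but carries no $p$-torsion for any prime (from the $\bF_p$-step with $j \neq 1$); since any nonzero torsion element has order divisible by some prime, this forces $H^j(N^\bullet) = 0$. For $j > 0$, vanishing is immediate from hypothesis (2). Finally, for $j = 0$ the $\bF_p$-step (valid since $0 \neq 1$) shows $H^0(N^\bullet)$ has trivial $p$-torsion for every $p$, hence is torsion-free. The only points requiring care are the indexing of the Tor term in the cohomological UCT (it lives in degree $k+1$, not $k-1$) and the standard fact that a torsion abelian group with no $p$-primary part for any prime $p$ must be trivial; neither poses a real obstacle.
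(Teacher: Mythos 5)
Your proof is correct and follows essentially the same route as the paper: both arguments run the universal coefficient theorem with $\Q$-coefficients to see that $H^k(N^\bullet)$ is torsion for $k\neq 0$, and with $\bF_p$-coefficients to kill the $\mathrm{Tor}_1$ term and hence all $p$-torsion in degrees $\leq 0$. The only cosmetic difference is that you read off both outer terms of the UCT sequence directly, whereas the paper argues by contradiction from a minimal degree carrying torsion; the indexing of the $\mathrm{Tor}$ term you use agrees with the paper's.
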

\begin{proof}
By (2), it suffices to show that $H^k(N^\bullet)=0$ for $k<0$, and $H^0(N^\bullet)$ is torsion-free. For any $k<0$, since $H^k(N^\bullet\otimes_\Z \Q)=0$, $H^k(N^\bullet)$ is a torsion $\Z$-module. Thus, it suffices to show that $H^k(N^\bullet)$ is torsion free for all $k\leq 0$. 

Suppose that for some $k\leq 0$, $H^k(N^\bullet)$ has nonzero torsion elements. Let $k_0$ be the smallest such $k$, and let $p$ be a prime number such that $H^{k_0}(N^\bullet)$ has nonzero $p$-torsion elements. Here, notice that if $\eta\in H^{k_0}(N^\bullet)$ has order $m>0$, and if $p$ is a prime divisor of $m$, then $\frac{m}{p}\eta$ is a $p$-torsion element. The $p$-torsion element in $H^{k_0}(N^\bullet)$ induces a short exact sequence,
\[
0\to \F_p\to H^{k_0}(N^\bullet)\to H^{k_0}(N^\bullet)/\F_p\to 0.
\]
Then as part of the associated long exact sequence, we have
\[
0= \mathrm{Tor}_2^\Z\big(H^{k_0}(N^\bullet)/\F_p, \F_p\big) \to \mathrm{Tor}_1^\Z(\F_p, \F_p)=\F_p \to \mathrm{Tor}_1^\Z(H^{k_0}(N^\bullet), \F_p),
\]
which implies that $\mathrm{Tor}_1^\Z(H^{k_0}(N^\bullet), \F_p)\neq 0$. 
By the universal coefficient theorem, there is a non-canonical isomorphism
\[
H^{k_0-1}(N^\bullet\otimes_\Z \F_p)\cong H^{k_0-1}(N^\bullet)\otimes_\Z \F_p \oplus \mathrm{Tor}_1^\Z(H^{k_0}(N^\bullet), \F_p).
\]
Thus, $H^{k_0-1}(N^\bullet\otimes_\Z \F_p)\neq 0$. Since $k_0\leq 0$, this contradicts our assumption~(1).
\end{proof}

\begin{prop}
Let $U$ be a complex manifold with a compactification $X$ satisfying the conditions in Theorem \ref{thm_main}, and let $Z\subset U$ be a connected closed analytic subvariety, which is also locally closed in $X$. Assume that $Z$ is a locally complete intersection, and the inclusion $Z\hookrightarrow U$ induces an isomorphism on the fundamental groups. Then $Z$ is a duality space of dimension $\dim_\C Z$. In particular, $U$ itself is a duality space of dimension $\dim_\C U$. 
\end{prop}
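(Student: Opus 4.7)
The strategy is to realize $R\Gamma(Z,\sL_Z)$ as a Mellin transform on $U$ of a perverse sheaf that extends to $X$, and then to invoke Theorem~\ref{thm_main}. Set $d := \dim_\C Z$ and let $i\colon Z\hookrightarrow U$ denote the closed inclusion. Since $i_*\colon \pi_1(Z)\to\pi_1(U)$ is an isomorphism, $i^*\sL_U$ is naturally identified with $\sL_Z$ (both are universal local systems with fiber $A[\pi_1(U)]=A[\pi_1(Z)]$ and matching monodromy), and the projection formula for the proper map $i$ yields
$$R\Gamma(Z,\sL_Z)\cong R\Gamma(U,\,Ri_*(i^*\sL_U))\cong R\Gamma(U,\,i_*A_Z\otimes_A \sL_U)=\sM_*^U(i_*A_Z).$$

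Since $Z$ is a connected locally complete intersection, $Z$ is equidimensional of pure complex dimension $d$ and Cohen--Macaulay, so $A_Z[d]$ is a perverse sheaf on $Z$. Closed pushforward is $t$-exact, so $i_*(A_Z[d])$ is perverse on $U$; and the open inclusion $\bar j\colon U\hookrightarrow X$ has divisorial complement and is therefore a quasi-finite Stein morphism, so $R\bar j_*$ preserves perversity and $R\bar j_* i_*(A_Z[d])$ is a perverse sheaf on $X$. Applying Theorem~\ref{thm_main} to this perverse sheaf and combining with the displayed identity shows that $\sM_*^U(i_*A_Z)[d]=\sM_*^U(i_*(A_Z[d]))$ is concentrated in degree zero, so $H^k(Z,\sL_Z)=0$ for every $k\ne d$.

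To establish the torsion-freeness of $H^d(Z,\sL_Z^\Z)$, I would re-run the above computation with coefficient rings $A=\Q$ and $A=\bF_p$ for every prime $p$ (Theorem~\ref{thm_main} holds for any Noetherian ring of finite cohomological dimension). Because $\sL_Z^\Z$ is a local system of free $\Z$-modules, base change gives $R\Gamma(Z,\sL_Z^\Z)\otimes_\Z^L A\cong R\Gamma(Z,\sL_Z^A)$, so each such complex is also concentrated in degree $d$. Representing $N^\bullet := R\Gamma(Z,\sL_Z^\Z)[d]$ by a bounded complex of free $\Z$-modules then verifies the hypotheses of Lemma~\ref{l63} (the $\Z$-computation gives vanishing in positive degrees; the $\Q$- and $\bF_p$-computations give the tensor-product vanishing), and the lemma concludes that $H^d(Z,\sL_Z^\Z)$ is torsion-free. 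Hence $Z$ is a duality space of dimension $d$; taking $Z=U$, which trivially satisfies all the hypotheses, yields the ``in particular'' assertion.

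The one genuine subtlety is verifying the perversity of $A_Z[d]$ on the possibly singular variety $Z$, which relies on a connected locally complete intersection being equidimensional and Cohen--Macaulay. Once this input is granted, the remainder of the argument is a direct assembly of the projection formula, $t$-exactness of pushforward along the closed inclusion $i$ and the open inclusion $\bar j$, Theorem~\ref{thm_main}, and Lemma~\ref{l63}.
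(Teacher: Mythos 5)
Your proposal is correct and follows essentially the same route as the paper: identify $R\Gamma(Z,\sL_Z)$ with $\sM_*^U(i_*A_Z)$ via $\pi_1(Z)\cong\pi_1(U)$, use that $A_Z[d]$ is perverse because $Z$ is a locally complete intersection, apply Theorem \ref{thm_main} with $\Z$, $\Q$ and $\F_p$ coefficients, and conclude torsion-freeness with Lemma \ref{l63}. The only points you gloss over are (i) where the hypothesis that $Z$ is locally closed in $X$ enters --- it is what guarantees that $R\bar j_*i_*A_Z\cong R\bar j_*\bar j^*\bar i_*A_{\bar Z}$ is genuinely constructible on $X$, which is needed to invoke Theorem \ref{thm_main} as stated --- and (ii) the justification for perversity of $A_Z[d]$, which is a statement about the rectified homological depth of a local complete intersection (Hamm--L\^e) rather than a consequence of Cohen--Macaulayness of the structure sheaf.
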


\begin{proof}
First of all, since $Z$ is locally closed in $X$, $R(j\circ i)_*\,\Z_Z\cong Rj_*\, j^{*}\, \bar{i}_*\,\Z_{\bar{Z}}$ is constructible on $X$, where $\bar{Z}$ is the closure of $Z$ in $X$, and $i: Z\to U$, $\bar{i}: \bar{Z}\to X$ and $j: U\to X$ are the inclusion maps (see, e.g., \cite[Theorem 2.5]{MS}). 

We use similar arguments and notations as in the proof of Proposition \ref{prop_semismall}. Let $d=\dim_\C Z$. 
Since $i\colon Z\to U$ induces an isomorphism between the fundamental groups, there is a natural isomorphism $\sM_*^Z(\Z_Z[d]) \cong\sM^U_*(i_*\,\Z_Z[d])$. Hence, it suffices to show that $\sM^U_*(i_*\,\Z_Z[d])$ is concentrated in degree zero, and its cohomology in degree zero is a torsion-free $\Z$-module. 

Since $Z$ is a locally complete intersection,  $\Z_Z[d]$, $\Q_Z[d]$ and $\bF_Z[d]$ are perverse sheaves on $Z$. Since $i$ is a closed embedding, $i_*\, \Z_Z[d]$, $i_*\, \Q_Z[d]$ and $i_*\, \bF_Z[d]$ are perverse sheaves on $U$. By Theorem~\ref{thm_main}, each of the complexes $\sM^U_*(i_*\, \Z_Z[d])$, $\sM^U_*(i_*\, \Q_Z[d])$ and $\sM^U_*(i_*\, \bF_Z[d])$ is concentrated in degree zero. As in the proof of Proposition \ref{prop_semismall}, we  conclude that $\sM^U_*(i_*\, \Z_Z[d])$ has torsion-free cohomology in degree zero. 
\end{proof}

\begin{cor}\label{cor_sing}
Let $Y\subset \P^n$ be a hypersurface such that the singular locus $Y_{\mathrm{sing}}$ has codimension at least 3. Let $D_1, \ldots, D_m\subset \P^n$ be a family of smooth hypersurfaces in general position and transversal to $Y$ such that $Y\cap D_1\cap \cdots \cap D_m=\emptyset$. Then, $Y\setminus (D_1\cup \cdots \cup D_m)$ is a duality space. 
\end{cor}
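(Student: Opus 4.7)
The plan is to apply the preceding proposition with $X = \mathbb{P}^n$, $U = \mathbb{P}^n \setminus (D_1 \cup \cdots \cup D_m)$, and $Z = Y \cap U = Y \setminus (D_1 \cup \cdots \cup D_m)$. The conclusion of that proposition is then exactly that $Z$ is a duality space of dimension $\dim_{\mathbb{C}} Z = n - 1$, which is the content of the corollary.

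First I would verify that $U \subset X$ satisfies the hypotheses of Theorem \ref{thm_main}. The boundary divisor $E = \bigcup D_i$ is simple normal crossing by the general position and smoothness assumptions, each open stratum $E_I^\circ = \bigcap_{i \in I} D_i \setminus \bigcup_{j \notin I} D_j$ is the complement of ample Cartier divisors in a smooth projective variety (hence affine, hence Stein), and the local fundamental group injects at every point of $E$ because this is precisely the class of examples already highlighted after Theorem \ref{thm_main} (cf.\ \cite[Section~2.3]{DS}). Next I would check the remaining hypotheses of the preceding proposition for $Z$: it is locally closed in $X$ (closed in $U$, with closure $Y$ in $X$); it is cut out of $U$ by the single defining equation of $Y$, hence a local complete intersection; and it is connected because $Y$ is (treating irreducible components separately if $Y$ is reducible).

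The main step is to prove that $Z \hookrightarrow U$ induces an isomorphism on fundamental groups. Transversality of the $D_i$ to $Y$ gives $Z_{\sing} = Y_{\sing} \cap U$, so $\codim_Z Z_{\sing} \geq 3$, i.e., $\dim Z_{\sing} \leq n - 4$. Since $U$ is a smooth affine variety of dimension $n \geq 4$ and $Z$ is a local complete intersection hypersurface in $U$ with $\dim Z_{\sing} \leq \dim Z - 3$, the Lefschetz-type theorem of Hamm--L\^e for hypersurface sections of affine varieties with small singular locus (equivalently, Goresky--MacPherson's stratified Morse theory applied via the rectified homotopical depth of $Z$) yields $\pi_i(U, Z) = 0$ for $i \leq 2$. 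The long exact sequence of the pair $(U, Z)$ then gives the desired isomorphism $\pi_1(Z) \xrightarrow{\sim} \pi_1(U)$, and the preceding proposition applies.

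The main obstacle is precisely this last step. The codimension hypothesis $\codim_Y Y_{\sing} \geq 3$ is designed so that $Y$ (and thereby $Z$) has maximal rectified homotopical depth, which is what the singular Lefschetz theorem needs in order to produce the required range of vanishing of $\pi_i(U, Z)$; pinning down the correct formulation from the Hamm--L\^e / Goresky--MacPherson literature and matching hypotheses carefully is the only delicate point of the argument.
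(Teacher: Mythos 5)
Your reduction is the same as the paper's: apply the preceding proposition with $X=\P^n$, $U=\P^n\setminus(D_1\cup\cdots\cup D_m)$ and $Z=Y\cap U$, check the hypotheses of Theorem \ref{thm_main} and the local-complete-intersection/locally-closed conditions, and isolate the isomorphism $\pi_1(Z)\xrightarrow{\sim}\pi_1(U)$ as the only real content. Where you diverge is in proving that isomorphism. The paper uses the codimension hypothesis $\codim_Y Y_{\mathrm{sing}}\geq 3$ to slice by a \emph{generic} linear subspace $L\subset\P^n$ with $\dim L=3$: since $\dim Y_{\mathrm{sing}}\leq n-4$, a generic $L$ misses $Y_{\mathrm{sing}}$, so $Y\cap L$ is a smooth surface in $\P^3$, and the generic-section Lefschetz theorem preserves $\pi_1$ of both $Z$ and $U$; one then only needs the Lefschetz theorem for a \emph{smooth} ample divisor transverse at infinity in $\P^3\setminus(D_1\cup\cdots\cup D_m)$. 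You instead apply the nongeneric, singular version of the Lefschetz theorem (Goresky--MacPherson/Hamm--L\^e) directly to the pair $(U,Z)$. Both routes are legitimate; the paper's buys a reduction to the smooth case at the cost of two applications of the generic-section theorem, while yours is more direct but leans on the harder theorem. One imprecision in your justification: for a local complete intersection the rectified homotopical depth is automatically maximal (equal to the dimension), independently of $\dim Z_{\sing}$, so ``maximal rhd'' cannot be where the hypothesis $\codim_Z Z_{\sing}\geq 3$ is used. In the Goresky--MacPherson connectivity bound for a nongeneric hypersurface section of the smooth $n$-fold $U$, the codimension of the singular (i.e.\ non-transversality) locus of $Z$ enters the vanishing range for $\pi_i(U,Z)$ directly, and that---together with transversality of $Y$ to the boundary strata $E_I^\circ$, which you should also verify---is what yields $\pi_i(U,Z)=0$ for $i\leq 2$. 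Since you flagged this citation as the delicate point, the sketch is acceptable, but as written the reason the codimension hypothesis suffices is not correctly identified.
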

\begin{proof}
Set $U=\P^n\setminus (D_1\cup \cdots \cup D_m)$. It is clear that $U$ with compactification $\P^n$ satisfies the conditions in Theorem \ref{thm_main}. Note that  $Y\setminus (D_1\cup \cdots \cup D_m)$ is a hypersurface in $U$. By the above proposition, we only need to show that the inclusion map $Y\setminus (D_1\cup \cdots \cup D_m) \to U$  induces an isomorphism on the fundamental groups. By the Lefschetz hyperplane section theorem,
after intersecting with a generic projective linear space $L \subset \P^n$ with $\dim L=3$, we can assume that $Y$ is a smooth hypersurface $Y$ in $\P^3$ intersecting $D_1\cup \cdots \cup D_m$ transversally. 
Using the Lefschetz hyperplane section theorem, we get that
$$Y\setminus (D_1\cup \cdots \cup D_m)\to \P^3 \setminus (D_1\cup \cdots \cup D_m)$$
induces an isomorphism on the fundamental groups.
\end{proof}

We regard a complex manifold $U$ as in Theorem \ref{thm_main} as the affine counterpart of a complex projective aspherical manifold. The Mellin transformations of certain projective aspherical manifolds are discussed in \cite{LMWd}, where, under certain assumptions, we show that the Mellin transformation of a nontrivial constructible complex is nonzero (see \cite[Proposition 3.3 and Proposition 5.6]{LMWd}). 
We conjecture that this fact remains true in the more general setting of our Theorem \ref{thm_main}. 
\begin{conj}\label{conj_1}
Assume that $U\subset X$ are complex manifolds satisfying the conditions in Theorem \ref{thm_main}. Let $\sF \in D^b_c(X, \K)$ be a constructible complex such that $\sF|_U\neq 0$. Then $\sM_*(\sF|_U)\neq 0$.
\end{conj}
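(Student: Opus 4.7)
The plan is to reduce the conjecture to a local nonvanishing statement for perverse sheaves, leveraging the $t$-exactness established in Theorem \ref{thm_main}, and then try to detect nonvanishing near a generic point of the support of $\sF|_U$.

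First I would use $t$-exactness to reduce to the perverse case. Since $\sM_*(-|_U)$ is $t$-exact from the perverse $t$-structure on $D^b_c(X,\K)$ to the standard $t$-structure on $D^b(\K[G])$, the hypercohomology spectral sequence
\[
E_2^{p,q} = H^p\bigl(\sM_*({}^p\!\HC^q(\sF)|_U)\bigr) \Longrightarrow H^{p+q}(\sM_*(\sF|_U))
\]
degenerates at $E_2$, yielding $H^n(\sM_*(\sF|_U)) \cong H^0(\sM_*({}^p\!\HC^n(\sF)|_U))$. Because open restriction is $t$-exact, $\sF|_U\neq 0$ forces some ${}^p\!\HC^q(\sF)|_U \neq 0$, so it suffices to prove the conjecture assuming $\sF = \sP$ is a perverse sheaf with $\sP|_U \neq 0$.

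Next I would localize. Choose a point $x$ in the smooth locus of the top-dimensional stratum of $\operatorname{supp}(\sP|_U)$, so that on a small $U_x = B_x \cap U$ the complex $\sP|_{U_x}$ is (up to shift) a nonzero $\K$-local system $\LL$ on a smooth submanifold. By condition (2) of Theorem \ref{thm_main} and Lemma \ref{lemma_YZ}, one has a splitting $\sL_U|_{U_x} \cong \bigoplus \sL_{U_x}$ as $\K[\pi_1(U_x)]$-local systems. The hope is then to use an attaching-triangle / Mayer--Vietoris argument relating $\sM_*^U(\sP|_U)$ to the local pieces $R\Gamma(U_x,\, \sP|_{U_x}\otimes_\K \sL_{U_x})$, which by Artin vanishing (Theorem \ref{av}) and $t$-exactness are concentrated in a single degree. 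The problem then reduces to showing that this local $\K[\pi_1(U_x)]$-module is nonzero; stalkwise this amounts to $\LL_x \otimes_\K \K[\pi_1(U_x)] \neq 0$, which is immediate.

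The hard part will be making the passage from local nonvanishing to global nonvanishing rigorous: a priori, the global $\sM_*(\sP|_U)$ could see cancellations between contributions from different strata of $\operatorname{supp}(\sP)$, or between $\sP|_U$ and the boundary contributions governed by Theorem \ref{thm_local}. In the abelian toric case ($U = (\C^*)^n$) this is controlled by the commutative ring $\K[\Z^n]$ and characteristic-variety techniques à la Gabber--Loeser, but for non-abelian $G$ the noncommutativity of $\K[G]$ obstructs direct localization arguments. I expect the resolution to go through one of two routes: either (i) an induction on the boundary stratification $E = \bigcup E_k$ using the local $t$-exactness of the multivariate Sabbah specialization (Theorem \ref{thm_local}) to propagate nonvanishing from generic to special points, or (ii) a reduction to the abelian case via a suitable map $U \to \mathrm{Alb}(U)$ (or, when available, to a semi-abelian variety) together with projection-formula arguments, pulling nonvanishing back from a ring over which standard commutative tools apply. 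The subtlety in either route is ensuring that nonvanishing is preserved, which is why we expect this to remain a genuine conjecture rather than an immediate consequence of Theorem \ref{thm_main}.
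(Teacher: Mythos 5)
You are attempting to prove a statement that the paper itself records as Conjecture \ref{conj_1} and does \emph{not} prove: the authors only note, in the remark following it, that the conjecture holds when $U$ is algebraic and admits a quasi-finite map to a semi-abelian variety, by invoking \cite[Proposition 5.6]{LMWd} and \cite[Proposition 5.4]{LMWc}. So there is no proof in the paper to compare against, and your attempt must stand on its own. Your first reduction is correct and is exactly the manipulation the authors use in the proposition immediately after the conjecture: by the $t$-exactness of Theorem \ref{thm_main} one has $H^k(\sM_*(\sF|_U))\cong H^0(\sM_*({}^p\mathcal{H}^k(\sF)|_U))$, and since open restriction is perverse $t$-exact, $\sF|_U\neq 0$ forces some ${}^p\mathcal{H}^k(\sF)|_U\neq 0$; hence it suffices to treat a perverse sheaf $\sP$ with $\sP|_U\neq 0$.

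The remaining step is where the genuine gap lies, and your sketch does not close it. For perverse $\sP$, Theorem \ref{thm_main} already tells you that $\sM_*(\sP|_U)$ is the single $\K[G]$-module $H^0(U,\sP|_U\otimes_\K\sL_U)$, so the danger is not ``cancellation between strata'' in a spectral sequence; it is that this one cohomology group of an infinite-rank local system could simply vanish. Local nonvanishing of $\sP|_{U_x}\otimes_\K\sL_{U_x}$ (or of stalks) gives no control: there is no Mayer--Vietoris or local-to-global principle that forces a nonzero local class to survive to global degree-zero cohomology, since the relevant maps go the wrong way and their kernels are not constrained. In the abelian case the known argument is genuinely global-algebraic: one uses that $\K[\Z^n]$ is commutative Noetherian, so the generic rank of the Mellin transform is defined and equals the Euler characteristic of $\sP|_U$, which is positive for a nonzero perverse sheaf on an affine torus; for noncommutative $\K[G]$ no such rank or characteristic-variety device is available, which is exactly why the statement is left as a conjecture. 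Your route (ii) --- factoring through a semi-abelian variety via a quasi-finite map --- is precisely how the special cases in the paper's remark are handled; your route (i) has no known implementation. In short: your reduction is sound, your diagnosis of the difficulty is accurate, but the proof is not complete, and indeed the statement is open.
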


\begin{remark}
If $U\subset X$ are algebraic varieties, and if $U$ admits a quasi-finite map to some semi-abelian variety, then the conjecture holds. This can be proved by combining \cite[Proposition 5.6]{LMWd} and \cite[Proposition 5.4]{LMWc}. Such examples include complements of essential linear hyperplane arrangements, complements of toric arrangements and complements of elliptic arrangements. 
\end{remark}


A particular consequence of the above conjecture is that, when $U$ is an algebraic variety, the perverse $t$-structure on $D^b_c(U, \K)$ can be completely detected by the Mellin transformation. 
\begin{prop}
Assume that $U\subset X$ are complex manifolds satisfying the conditions in Theorem \ref{thm_main}, and assume that the above conjecture holds. Then for an $A$-constructible complex $\sF$ on $X$, $\sF|_U$ is perverse if and only if $\sM_*(\sF|_U)$ is concentrated in degree zero. 
\end{prop}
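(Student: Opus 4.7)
My plan is to combine the $t$-exactness of the Mellin transformation (Theorem \ref{thm_main}) with the conservativity supplied by Conjecture \ref{conj_1}. The forward implication is essentially given: if $\sF|_U$ is perverse, then Theorem \ref{thm_main} places $\sM_*(\sF|_U)$ in degree zero. Everything then reduces to the reverse direction.

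For the reverse direction, I would first record two ingredients. First, the open restriction $j^{*}: D^b_c(X, A) \to D^b_c(U, A)$ is $t$-exact with respect to the perverse $t$-structures (since $j^{*} = j^!$ for an open inclusion), so that the natural identification ${}^pH^i(\sF)|_U \cong {}^pH^i(\sF|_U)$ holds for every $i$. Second, Theorem \ref{thm_main} says that the composite $\sF \mapsto \sM_*(\sF|_U)$ is $t$-exact as a functor from $D^b_c(X, A)$ to $D^b(A[G])$; being $t$-exact, it intertwines perverse and standard cohomology, yielding an isomorphism
\[
H^i\bigl(\sM_*(\sF|_U)\bigr) \;\cong\; \sM_*\bigl({}^pH^i(\sF)|_U\bigr) \qquad \text{for every } i \in \Z,
\]
where the right-hand side is the degree-zero $A[G]$-module obtained by applying the $t$-exact functor to the perverse sheaf ${}^pH^i(\sF)$ on $X$.

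With these in hand, the argument runs as follows. Suppose $\sM_*(\sF|_U)$ sits in degree zero. Then the displayed isomorphism forces $\sM_*\bigl({}^pH^i(\sF)|_U\bigr) = 0$ for each $i \neq 0$. I will then apply Conjecture \ref{conj_1} to the constructible complex ${}^pH^i(\sF) \in D^b_c(X, A)$ to conclude that ${}^pH^i(\sF)|_U = 0$, equivalently ${}^pH^i(\sF|_U) = 0$, for every $i \neq 0$. This is exactly the statement that $\sF|_U$ is perverse, completing the proof.

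The argument is short, and I do not anticipate a serious obstacle; the whole content has been offloaded to the two inputs. The points to watch are the compatibility ${}^pH^i(\sF)|_U \cong {}^pH^i(\sF|_U)$, which is what allows the conjecture (stated for complexes on $X$) to be applied, and the careful bookkeeping that the conjecture is invoked on the global object ${}^pH^i(\sF) \in D^b_c(X, A)$ rather than on its restriction to $U$.
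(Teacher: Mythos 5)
Your proposal is correct and follows essentially the same route as the paper: the forward direction is Theorem \ref{thm_main}, and the converse uses the $t$-exactness isomorphism $H^i(\sM_*(\sF|_U))\cong \sM_*({}^p\sH^i(\sF)|_U)$ together with Conjecture \ref{conj_1} applied to ${}^p\sH^i(\sF)$ (the paper phrases this contrapositively, but the argument is identical). Your explicit remark that ${}^p\sH^i(\sF)|_U\cong{}^p\sH^i(\sF|_U)$, needed to invoke the conjecture on a complex defined on $X$, is a point the paper leaves implicit.
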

\begin{proof}
The ``only if" part is exactly Theorem \ref{thm_main}. To show the converse, suppose that $\sF|_U$ is not a perverse sheaf. Then there exists $k\neq 0$ such that $^p\sH^k(\sF|_U)\neq 0$. It follows from Theorem \ref{thm_main} that $H^k(\sM_*(\sF|_U))\cong H^0(\sM_*(\, ^p\sH^k(\sF|_U)))$, which is nonzero by Conjecture \ref{conj_1}. This is a contradiction to the assumption that $\sM_*(\sF|_U)$ is concentrated in degree zero. 
\end{proof}


\end{document}